\newcommand{\E}{\mathbb{E}}
\newcommand{\eps}{\varepsilon}
\newcommand{\N}{\ensuremath{\mathbb{N}}}
\newcommand{\R}{\ensuremath{\mathbb{R}}}     
\newcommand{\Z}{\ensuremath{\mathbb{Z}}}    
\renewcommand{\P}{\ensuremath{\mathbb{P}}}
\newcommand{\X}{\ensuremath{\mathcal{X}}}
\newcommand\smallO{
  \mathchoice
    {{\scriptstyle\mathcal{O}}}
    {{\scriptstyle\mathcal{O}}}
    {{\scriptscriptstyle\mathcal{O}}}
    {\scalebox{.5}{$\scriptscriptstyle\mathcal{O}$}}
  }
\DeclareMathOperator{\var}{\mathrm{var}}
\newtheorem{remark}{Remark}
\newtheorem{lemma}{Lemma}
\newtheorem{theo}{Theorem}
\newtheorem{prop}{Proposition}
\newtheorem{coro}{Corollary}
\newtheorem*{assump}{Assumption}
\title[Mixing rates for potentials of non-summable variations]{Mixing rates for potentials of non-summable variations}
\author{Christophe Gallesco$^1$}
\address{$^1$Departmento de Estat\'istica, Instituto de Matem\'atica, Estat\'istica e Ci\^encia de Computa\c{c}\~ao, Universidade de Campinas, Brasil.}
\email{gallesco@unicamp.br}
\author{Daniel Y. Takahashi$^2$}
\address{$^2$ Instituto do C\'erebro, 
Universidade Federal do Rio Grande do Norte, Brasil.} 
\email{takahashiyd@gmail.com}
\subjclass[2000]{Primary 37A25, 37A50  Secondary 60G07.\\
{ Keywords:} $g$-measure, chains of infinite order, coupling, Central Limit Theorem, concentration inequality}
\begin{document}

\begin{abstract}
Mixing rates, relaxation rates, and decay of correlations for dynamics defined by potentials with summable variations are well understood, but little is known for non-summable variations. This paper exhibits upper bounds for these quantities for dynamics defined by potentials with square summable variations. We obtain these bounds as corollaries of a new block coupling inequality between pair of dynamics starting with different histories. As applications of our results, we prove a new weak invariance principle and a Hoeffding-type inequality.
\end{abstract}

\maketitle


\section{Introduction}
Let $A$ be a countable set, called {\it alphabet}. Consider a measurable function $\phi: A\times A^{\{-1,-2, \ldots \}}\to \R$ such that $\sum_{a\in A}e^{\phi(a,x)}=1$ for all $x\in A^{\{-1,-2, \ldots \}}$. The function $\phi$ is called a {\it normalized potential}, and the probability kernel $g:= e^{\phi}$ (also known as {\it $g$-function}) is a natural generalization of Markov kernels.
Let $\eta=(\eta_n)_{\Z}$ be the canonical projections on $A^{\Z}$, \emph{i.e.}, for all $x\in A^{\Z}$ and all $n\in \Z$, $\eta_n(x)=x_n$. For $y \in A^{\{0,-1, \ldots \}}$, let $\mu^y$ be the probability measure on $A^{\Z}$ such that $\mu^y[(\eta_0,\eta_{-1},\dots) \in B]=\delta_{y} [B]$ for all $B\subset A^{\{0, -1, \ldots \}}$ measurable. For $n\geq 0$, $\mu^y[\eta_{n+1}=a\mid (\eta_n,\eta_{n-1},\dots)=x]=e^{\phi(a,x)}$ 
for every $a\in A$ and $\mu^y$-a.e.\ $x$ in $A^{\{-1,-2, \ldots \}}$. Let $T$ denotes the shift operator on $A^{\{0,-1, \ldots \}}$. We indicate by $d_{\text{TV}}$ the total variation distance, that is, if $P$ and $Q$ are two probability measures on the same $\sigma$-algebra $\mathcal{F}$,
\[
d_{\text{TV}}(P,Q)=\sup_{F\in \mathcal{F}}|P[F]-Q[F]|.
\]

In this paper, we obtain upper bounds, respectively, for the relaxation rate 
\begin{equation*}
L(n) :=\sup_{y,z} d_{\text{TV}}(\mu^y[\eta_n \in \cdot\;],\mu^z[\eta_n \in \cdot\;]),
\end{equation*}
the mixing rate
\begin{equation*}
M(n) := \sup_{y,z} d_{\text{TV}}(\mu^y[(\eta_j)_{j\geq n} \in \cdot\;],\mu^z[(\eta_j)_{j\geq n} \in \cdot\;]),
\end{equation*}
and the decay rate of correlations
\begin{equation*}
\rho_{f,\hat{f}}(n) := \Big| \int f\circ T^n \;{\hat f} d\tilde{\mu}-\int f d\tilde{\mu}\int {\hat f} d\tilde{\mu} \Big|
\end{equation*}
when $\tilde{\mu}$ is the unique shift-invariant measure compatible with $\phi$ (see the next section for the definition of {\it compatibility}) and $f, \hat{f}$ are suitable functions (see Theorem \ref{Maintheo2}). \cite{bressaud/fernandez/galves/1999a} and  \cite{pollicott2000rates} obtained upper bounds for $L(n), M(n)$ and $\rho_{f,\hat{f}}(n)$  for potentials of summable variations and finite alphabets. \cite{gouezel2004sharp} obtained sharp lower bounds for the decay of correlation for dynamics with H\"older continuous (\emph{i.e.}, exponentially decaying) potentials and countable alphabet. Our contribution is twofold. We obtain upper bounds for $L(n), M(n),$ and $\rho_{f,\hat{f}}(n)$  when the variation rate $ \var_k(\phi)$ decays as $\mathcal{O}(k^{-(1/2+\delta')})$ for any $ \delta' > 0$. Moreover, our results also holds for countably infinite alphabet $A$. Theorem~\ref{Maintheo} is our main result, showing a new upper bound for the coupling error between $\mu^y$ and $\mu^z$. Corollary \ref{Cor1} answers a question posed in \cite{johansson2008square}, in which the authors ask for a bound for $L(n)$ when the variation of $\phi$ is not summable. Corollary \ref{Cor2} shows a bound for $M(n)$, which  cannot be achieved by simply using the union bound and Corollary \ref{Cor1}. The result is new even for the case of summable variations. The interest in $L(n)$ stands from the fact that it is the natural generalization of mixing times for Markov chains. \cite{GGT2018} showed that $M(n)$ converges to $0$ only when $\var_k(\phi)$ is square summable $\tilde{\mu}$-a.s., hence Corollary \ref{Cor2} covers the main cases of interest. Theorem \ref{Maintheo2} gives an upper bound for the speed of decay of correlations, extending Theorem 1 in \cite{bressaud/fernandez/galves/1999a}. \cite{johansson/oberg/pollicott/2012} showed, when the alphabet $A$ is finite, that there is a unique shift-invariant measure $\tilde{\mu}$ compatible with $\phi$ when $\var_k(\phi)  \in \smallO(k^{-(1/2)})$. Moreover,  \cite{berger2018non} showed that whenever $\var_k(\phi)  \in \mathcal{O}(k^{-(1/2-\delta)})$ for any $\delta > 0$, there exists a normalized potential $\phi$ that exhibits multiple compatible shift-invariant measures. Therefore, Theorem \ref{Maintheo2} also covers the main variation rates of interest under uniqueness of the compatible shift-invariant measure. In Corollary \ref{coro3}, we use  Theorem \ref{Maintheo2} to obtain upper bounds on rate of correlation decay for non-normalized potentials. We illustrate the application of our inequalities in three cases. The first application proves a novel weak invariance principle for additive functionals of dynamics with non-summable variations. The second application shows that we can obtain Hoeffding-type bounds for averages of random variables when the variation of $\phi$ is not summable. The third example illustrates how we can apply our results on a Poisson autoregression model, which is popular in applied works.

The proof technique is based on a renewal equation and coupling inequalities. These ideas were developed in \cite{coelho_quas_1998, bressaud/fernandez/galves/1999a, comets_fernandez_ferrari_2002}. We improve on the coupling bounds obtained in \cite{bressaud/fernandez/galves/1999a} by using a coupling between blocks of coordinates, instead of one coordinate at a time.  A block coupling idea was used in \cite{johansson/oberg/pollicott/2012} to obtain sharp conditions for uniqueness of the equilibrium measure for $\phi$ on a finite alphabet $A$, but no mixing rate was obtained. A difference between \cite{johansson/oberg/pollicott/2012} and our approach is that we upper bound the block coupling using different renewal processes leading to a distinct renewal equation. This new renewal equation allows us to upper bound the speed of decay of coupling inequality even when the variation is not summable (see Theorem \ref{Maintheo}).

\section{Definitions}
Let the alphabet $A$ be a countable set, $\mathcal{X}=A^{\Z}$ and $\mathcal{X}_{-} = A^{\Z_-}$ where $\Z_-=\{0,-1,-2,\dots\}$. We endow $\mathcal{X}$ and $\mathcal{X}_-$ with the product topology and its corresponding Borel $\sigma$-algebra. The topologies and $\sigma$-algebras considered on subsets of $\mathcal{X}$ and $\mathcal{X}_-$ will always be the trace topologies and $\sigma$-algebras. We denote by $x_i$ the $i$-th coordinate of $x \in \mathcal{X}$ and for $-\infty< i \leq j<\infty$ we write $x^{-i}_{-j}:=(x_{-i},\ldots, x_{-j})$, $x^{-i}_{-\infty}:=(x_{-i},x_{-i-1},\dots)$ and $x_i^{\infty}:=(\dots,x_{i+1},x_{i})$. If $i < j$, $x^i_j = \o$. For $x\in \mathcal{X}$ and $y \in \mathcal{X}_{-}$, a \emph{concatenation} $x^{0}_{-i}y$ is a new sequence $z\in \mathcal{X}_{-}$ with $z^{0}_{-i} = x^{-1}_{-i}$ and $z^{-i-1}_{-\infty} = y$. We take $\o$ to be the neutral element of the concatenation operation, that is $\o x=x$ for all $x\in \mathcal{X}_{-}$.  Note that we are using the convention, consistent with the concatenation operation, that when we scan an element $x\in\mathcal{X}$ from the left to the right we go further into the past.

Consider a measurable function $\phi: \mathcal{X}_{-}\to \R$, which we call potential.  We say that $\phi$ is {\it normalized} if it satisfies
\begin{equation*}
\sum_{a\in A}e^{\phi(ax)}=1
\end{equation*}
for all $x\in \mathcal{X}_{-}$. To a normalized potential $\phi$ we can associate a {\it probability kernel} $g$ on the alphabet $A$ by defining $g=e^{\phi}$. The \emph{variation} of order $k\geq 0$ of $\phi$ is defined by
\begin{equation*}
\var_{k}(\phi):=\frac{1}{2}\sup_{z \in \mathcal{X}}\sup_{x,y\in \mathcal{X}_-}\sum_{b\in A}\big|\phi(bz_{-k}^{-1}x)-\phi(bz_{-k}^{-1}y)\big|. 
\end{equation*}
When $A$ is finite, the variation is usually defined by taking the supremum over $b \in A$ instead of the sum. Nevertheless, our definition is more convenient when the alphabet is infinite and has appeared in the literature before \cite{chazottes2020optimal}. The constant $1/2$ in the definition relates $\var_k(e^{\phi})$ to total variation distance when $\phi$ is normalized.

We also define, for $k\geq 0$, the {\it $\chi^2$-variation} of order $k$ of $\phi$ as
\begin{equation*}
\chi^2_{k}(\phi)=\sup_{z\in \mathcal{X}}\sup_{x,y\in \mathcal{X}_-}\sum_{b\in A}\frac{\big(e^{\phi(bz_{-k}^{-1}x)}-e^{\phi(bz_{-k}^{-1}y)}\big)^2}{e^{\phi(bz_{-k}^{-1}y)}}. 
\end{equation*}

The use of $\chi^2$-variation to measure the regularity of potentials seems to be new, therefore it is interesting to compare it to variation, which is more standard. When $\phi$ is normalized, by using Cauchy-Schwarz inequality, we have that $ \var^2_{k}(e^\phi)\leq  \frac{1}{4}\chi^2_{k}(\phi)$ for any $k\geq 1$. When the alphabet  $A$ is finite and $\phi$ is normalized $\var^2_k(\phi)$ and $\chi^2_k(\phi)$ are comparable, that is, there exist positive constants $K_1$ and $K_2$ such that $K_1\var^2_k(\phi)\leq \chi^2_k(\phi)\leq  K_2\var^2_k(\phi)$. The $\chi^2$-variation introduced in this work will be particularly useful to study asymptotic properties of positive probability kernels on infinite $A$ (cf. Section \ref{Poisson}). 

Let $\eta=(\eta_n)_{\Z}$ be the canonical projections on $\mathcal{X}$, that is, for all $x\in \mathcal{X}$, $\eta_n(x)=x_n$ for all $n\in \Z$. We say that a probability measure $\mu$ on $\mathcal{X}$ is \emph{compatible} with a normalized potential $\phi$ if there exits a probability measure $P$ on $\mathcal{X}_-$ such that 
\[
\mu[\eta_{-\infty}^{0}\in B]=P[B]
\]
for all $B\subset \mathcal{X}_-$ measurable and if for $n\geq 0$
\begin{equation*}\label{compa}
\mu[\eta_{n+1}=a\mid \eta_{-\infty}^{n}=x]=e^{\phi(ax)}
\end{equation*}
for every $a\in A$ and $\mu$-a.e.\ $x$ in $\mathcal{X}_{-}$. \cite{johansson/oberg/pollicot/2007} showed that if
\begin{equation*}
\sum_{k = 0}^\infty \sup_{z \in \mathcal{X}}\sup_{x,y\in \mathcal{X}_-}\sum_{b\in A}\big(e^{\phi(bz_{-k}^{-1}x)/2}-e^{\phi(bz_{-k}^{-1}y)/2}\big)^2 < \infty,
\end{equation*}
then there is at most one shift-invariant invariant compatible measure with $\phi$. From \cite[Lemma 3.3.9. ]{reiss2012approximate}, for $k \geq 0$, we have that
\[
\sum_{b\in A}\big(e^{\phi(bz_{-k}^{-1}x)/2}-e^{\phi(bz_{-k}^{-1}y)/2}\big)^2  \leq \sum_{b\in A}\frac{\big(e^{\phi(bz_{-k}^{-1}x)}-e^{\phi(bz_{-k}^{-1}y)}\big)^2}{e^{\phi(bz_{-k}^{-1}y)}},\] 
hence the summability of $\chi^2_k(\phi)$ implies the existence of at most one shift-invariant invariant compatible measure.  

 When $\phi$ is not normalized, the definition of a compatible measure loses its meaning. Nevertheless, we can associate  a set of shift-invariant measures called equilibrium states for not necessarily normalized $\phi$  \cite{walters/1975}. Equilibrium states are characterized via {\color{red}a} variational principle and coincide with shift-invariant compatible measures when $\phi$ is normalized. An equilibrium state $\tilde{\mu}$ compatible with a normalized $\phi$ is also called $g$-measure in ergodic theory \cite{keane/1972}.  In probability literature, $g$-measures are known  as chains of complete connections \cite{doeblin/fortet/1937, iosifescu/grigorescu/1990}, chains of infinite order \cite{harris/1955}, \cite{keane/1972}, random-step Markov process \cite{kalikow}, and uniform martingale \cite{kalikow}. Compatible measures that are not necessarily shift-invariant are called $g$-chains \cite{johansson/oberg/pollicott/2012} or  stochastic chains of unbounded memory \cite{GGT2018}. When there is more than one shift-invariant measure compatible with $\phi$, we say that there is a \emph{phase transition}, otherwise we say that the shift-invariant compatible measure is \emph{unique}.

\section{Results}
\label{Results}

 In this paper, we will work under the following assumption.
 \begin{assump}[$\mathcal{A}$]
 \label{Assump1}
 $\phi$ is a potential on $\mathcal{X}_-$ such that for all $k\geq 1$, 
 \begin{equation} \label{eq:assumption}
 \chi^2_{k}(\phi)\leq \frac{C}{k^{1+\delta}}
 \end{equation}
for some $C>0$ and $\delta>0$.
 \end{assump}

\begin{remark}
When the alphabet $A$ is finite and $\phi$ is normalized, Assumption ($\mathcal{A}$) is equivalent to 
 \[
 \var_{k}(\phi)\leq \frac{C'}{k^{\frac{1+ \delta}{2}}},
 \]
for some $C' > 0$ and same $\delta $ as in \eqref{eq:assumption}. Observe that $ \var_k(\phi)$ is not summable when $\delta \in (0, 1]$.
\end{remark}  

Now, consider $\mathcal{X}\times \mathcal{X}$ with the projection maps $\hat{\eta}=(\hat{\eta}_n)_{n\in \Z}$ and $\hat{\omega}=(\hat{\omega}_n)_{n\in \Z}$ such that for $(x,y)\in \mathcal{X}\times \mathcal{X}$, $\hat{\eta}_n(x,y)=x_n$ and $\hat{\omega}_n(x,y)=y_n$ for all $n\in \Z$. Let us also denote by $\widehat{\mathcal{C}}(\phi)$ the set of probability measures $P$ on $\mathcal{X}\times \mathcal{X}$ such that the pushforward measures $\hat{\eta}_*P$ and $\hat{\omega}_*P$ are compatible with $\phi$.
We also introduce the process $X=(X_n)_{n\geq 1}$, such that for all $n\geq 1$,
\begin{equation*}
X_n=\mathds{1}\{\exists j\in [K_n, K_{n+1}), \hat{\eta}_j \neq \hat{\omega}_j\},
\end{equation*}
where $(K_n)_{n\geq 1}$ is a fixed strictly increasing sequence of natural numbers such that $K_1=1$. 
Here is our main result followed by two corollaries.
\begin{theo}
\label{Maintheo}
Let $\phi$ be a normalized potential that satisfies Assumption ($\mathcal{A}$). Let $K_n=\lfloor n^{\beta} \rfloor$ for $\beta\geq 1$ and $\beta>1/\delta$. For all measures $\mu$ and $\nu$ compatible with $\phi$, there exists $\P\in \widehat{\mathcal{C}}(\phi)$ such that $\hat{\eta}_*\P=\mu$, $\hat{\omega}_*\P=\nu$ and for  $n\geq 1$, 
\begin{equation*}
\P[X_n=1]\leq \frac{C_1}{n^{\frac{\beta \delta+1}{2}}}
\end{equation*}
where $C_1$ is a positive constant depending on $C, \delta$, and $\beta$.
\end{theo}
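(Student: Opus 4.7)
My plan is to build $\P$ block by block via maximal couplings and then to control $\P[X_n=1]$ by combining a Pinsker-type estimate for a single block with a renewal inequality. After any coupling of the pasts $(\hat\eta_{-\infty}^{0},\hat\omega_{-\infty}^{0})$ with marginals $\mu,\nu$, I extend sequentially: given $\hat\eta_{-\infty}^{M_n-1},\hat\omega_{-\infty}^{M_n-1}$, draw $(\hat\eta_{M_n}^{M_{n+1}-1},\hat\omega_{M_n}^{M_{n+1}-1})$ from a maximal coupling of the conditional block laws
$$P_n:=\mu\bigl[\hat\eta_{M_n}^{M_{n+1}-1}\in\cdot\mid \hat\eta_{-\infty}^{M_n-1}\bigr],\qquad Q_n:=\nu\bigl[\hat\omega_{M_n}^{M_{n+1}-1}\in\cdot\mid \hat\omega_{-\infty}^{M_n-1}\bigr].$$
Write $U_n$ for the length of the common suffix of the two pasts at time $M_n$ and $L_n:=M_{n+1}-M_n$; the maximal coupling then gives $X_n=0$ with conditional probability $1-d_{\text{TV}}(P_n,Q_n)$.

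The first main step is to bound $d_{\text{TV}}(P_n,Q_n)$ in terms of $U_n$. Expanding $P_n,Q_n$ as ordered products of one-step $g$-kernels and applying the chain rule for relative entropy yields
$$\mathrm{KL}(P_n\|Q_n)=\sum_{i=1}^{L_n}\E_{P_n}\!\left[\mathrm{KL}\bigl(g(\cdot\mid w_i^{(1)})\,\big\|\,g(\cdot\mid w_i^{(2)})\bigr)\right],$$
where $w_i^{(1)}$ is the $\hat\eta$-past up to time $M_n+i-2$ and $w_i^{(2)}$ is the $\hat\omega$-past up to time $M_n-1$ concatenated with the already-sampled block coordinates $\hat\eta_{M_n}^{M_n+i-2}$; by construction the two sequences share their last $U_n+i-1$ symbols. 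Using the pointwise bound $\mathrm{KL}\le\chi^2$, Assumption~(A), and Pinsker's inequality gives
$$d_{\text{TV}}(P_n,Q_n)^2\le \tfrac{1}{2}\sum_{k=U_n}^{U_n+L_n-1}\frac{C}{k^{1+\delta}}.$$
Since after a streak of $s$ successful blocks ending at $n-1$ one has $U_n\ge M_n-M_{n-s}$, substituting $M_n\sim n^\beta$ and $L_n\sim \beta n^{\beta-1}$ yields a conditional failure probability $a_{n,s}\le C_2\,s^{-(1+\delta)/2}\,n^{-(\beta-1)\delta/2}$ for $s\ge 1$ (with only the trivial bound $a_{n,0}\le 1$ in the worst case $U_n=0$). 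The endpoint $s=n-1$ already produces the target rate $n^{-(1+\beta\delta)/2}$.

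The renewal step is an induction on $n$ for $u_n:=\P[X_n=1]$. Decomposing by the index $n-1-s$ of the most recent preceding block failure, the coupling produces
$$u_n\le \sum_{s=0}^{n-2}u_{n-1-s}\,a_{n,s}\;+\;a_{n,n-1}.$$
Assuming $u_m\le C_1\,m^{-(1+\beta\delta)/2}$ for every $m<n$, the $s=0$ term is bounded by $C_1(n-1)^{-(1+\beta\delta)/2}$, which is already of the target order. For $s\ge 1$ I split the sum at $s\approx n/2$: on $s\le n/2$ the factor $(n-1-s)^{-(1+\beta\delta)/2}$ is of order $n^{-(1+\beta\delta)/2}$; on $s>n/2$ the residual series $\sum_t t^{-(1+\beta\delta)/2}$ converges because $(1+\beta\delta)/2>1$, which is precisely the assumption $\beta>1/\delta$. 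A short arithmetic check combining the two halves shows that the sum is bounded by $C_1'\,n^{-(1+\beta\delta)/2}$, and if $C_1$ is chosen large enough to absorb $C_1'$ and the boundary term $a_{n,n-1}$ the induction closes.

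The main difficulties will be: (i) the chain-rule/$\chi^2$/Pinsker derivation of the single-block bound for genuinely random pasts, including the careful identification of the common-suffix length $U_n+i-1$ at each step and the verification that we are in the favourable regime $L_n\le U_n$ over the relevant range of $s$; and (ii) the arithmetic of the split-sum estimate, where the exponents $(1+\beta\delta)/2$ and $(1+\delta)/2$ must be tracked precisely and the constant $C_1$ tuned so that the inductive bound self-improves. The hypothesis $\beta>1/\delta$ is used exactly once, through the summability of the renewal kernel $t\mapsto t^{-(1+\beta\delta)/2}$, and is what ultimately makes the non-summable-variation regime reachable.
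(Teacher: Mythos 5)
Your coupling construction and single-block estimate match the paper: the block-by-block maximal coupling, the chain rule for relative entropy, the bound $\mathrm{KL}\le\chi^2$ together with Assumption (A), and Pinsker's inequality give exactly the paper's Lemma~\ref{chideu} and inequality \eqref{Pins}, and your estimate $a_{n,s}\lesssim s^{-(1+\delta)/2}n^{-(\beta-1)\delta/2}$ is consistent with the paper's uniform-in-$n$ bound $q_k^n\lesssim k^{-(\beta\delta+1)/2}$ (obtained there via the monotonicity Lemmas~\ref{HJ1} and~\ref{HJ2}). The gap is in the renewal step. First, you bound the streak probability $\P[X_{n-1-s}=1,\,X_{n-s}^{n-1}=0]$ by $u_{n-1-s}$ alone, discarding the factor accounting for the $s$ consecutive successes. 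The resulting kernel $s\mapsto a_{n,s}$ has total mass $a_{n,0}+\sum_{s\ge1}a_{n,s}$, which need not be less than $1$; an inequality $u_n\le\sum_s a_{n,s}u_{n-1-s}+a_{n,n-1}$ with kernel mass $\ge1$ is satisfied by $u_n\equiv\mathrm{const}$ and therefore cannot force any decay. Concretely, with your trivial bound $a_{n,0}\le1$ the $s=0$ term alone is $C_1(n-1)^{-(1+\beta\delta)/2}>C_1 n^{-(1+\beta\delta)/2}$, so the induction already fails before the remaining terms are added; and even replacing $a_{n,0}\le1$ by $a_{n,0}\le1-\eps$, closing the induction would require $\sum_{s\ge1}a_{n,s}$ to be smaller than roughly $\eps$, which is a constraint on the potential that Assumption (A) does not provide.

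The paper handles both issues at once. The Bretagnolle--Huber inequality (not Pinsker) supplies the uniform bound $q_k^n\le\sqrt{1-\exp(-\sum_k\chi^2_k(\phi))}\le1-\eps$ for \emph{all} $k$, including $k=0,1,2$ where the Pinsker bound is useless; this is the ingredient your proposal is missing. Then, instead of an induction on a lossy inequality, the paper dominates $X$ (via Strassen's theorem, applied twice) by an auxiliary $\{0,1\}$-valued process $Z$ whose conditional failure probabilities are exactly $b_k=\sup_n q_k^n$. For $Z$ the exact renewal equation $u_n=\sum_{k=1}^n f_k u_{n-k}$ holds with $f_k=b_{k-1}\prod_{l=0}^{k-2}(1-b_l)$, so that $\sum_k f_k=1-\prod_k(1-b_k)<1$ automatically (here $\beta>1/\delta$ gives $\sum_k b_k<\infty$), and the decay $u_n\lesssim n^{-(1+\beta\delta)/2}$ follows from the renewal lemma of Bressaud--Fern\'andez--Galves (Lemma~\ref{renewal}), whose proof is a generating-function argument rather than the ``short arithmetic check'' you describe. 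To repair your proof you would need to retain the product $\prod_{l=0}^{s-1}(1-b_l)$ in the streak probability so as to recover a defective renewal kernel, and then either invoke that lemma or reprove it.
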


\begin{coro}
\label{Cor1}
Let $\phi$ be a normalized potential that satisfies Assumption ($\mathcal{A}$). If $\delta>1$, we have for all $n\geq 1$
\begin{equation*}
L(n)\leq \frac{C_2}{n^{\frac{1+\delta}{2}}}
\end{equation*}
where $C_2$ is a positive constant depending on $C$ and $\delta$.\\
$ $

If $\delta\in (0,1]$, we have for all $n\geq 1$ and $\delta'<\delta$, 
\begin{equation*}
 L(n)\leq \frac{C_3}{n^{\delta'}},
\end{equation*}
where $C_3$ is a positive constant that depends on $C$, $\delta$, and $\delta'$.
\end{coro}

\begin{coro}
\label{Cor2}
Let $\phi$ be a normalized potential that satisfies Assumption ($\mathcal{A}$). For all $\delta'<\delta$, we have for all $n\geq 1$
\begin{equation*}
M(n) \leq \frac{C_4}{n^{\frac{\delta'}{2}}},
\end{equation*}
where $C_4$ is a positive constant that depends on $C$, $\delta$, and $\delta'$.\\
\end{coro}

\begin{remark}
When $\delta>1$ and $A$ is finite, we can use \cite[Theorem 1]{bressaud/fernandez/galves/1999a} and the union bound to obtain
\begin{equation*}
M(n) \leq \frac{C_5}{n^{\frac{\delta-1}{2}}},
\end{equation*}
where $C_5 > 0$  is a constant that depends on $C$ and $\delta$. Hence, the result in Corollary \ref{Cor2} gives a sharper upperbound, even when the potential is summable and the alphabet $A$ is finite.
\end{remark}

We now look at the correlations decay for the shift-invariant measure compatible with a potential $\phi$. For this, we need the following definitions. Consider the shift operator $T: \mathcal{X}_{-}\to \mathcal{X}_{-}$ such that for all $x\in  \mathcal{X}_{-}$, $Tx=Tx_{-\infty}^0=x_{-\infty}^{-1}$. For non-constant $\phi$, let us consider the seminorm
\[
\|f\|_{\phi}=\sup_{k\geq 1}\frac{\var_k(f)}{\var_k(e^\phi)},
\]
and the subspace of $\mathcal{C}(\mathcal{X}_-,\R)$ defined by
$$V_{\phi}=\Big\{f \in \mathcal{C}(\mathcal{X}_-,\R):\|f\|_{\phi}<\infty   \Big\}.$$
\begin{theo}
\label{Maintheo2}
Let $\phi$ be a normalized potential that satisfies Assumption ($\mathcal{A}$). Assume that a shift-invariant probability measure $\tilde{\mu}$ compatible with $\phi$ exists. Let $f\in L^1(\tilde{\mu})$ and $\hat{f}\in V_{\phi}$.

If $\delta>1$, we have for all $n\geq 1$
\begin{equation*}
\rho_{f,\hat{f}}(n) \leq \frac{C_6}{n^{\frac{1+\delta}{2}}}\|f\|_1 \|\hat{f}\|_{\phi}
\end{equation*}
where $C_6$ is a positive constant that depends on $C$ and $\delta$.

If $\delta\in (0,1]$, we have for all $n\geq 1$ and $\delta'<\delta$, 
\begin{equation*}
\rho_{f,\hat{f}}(n) \leq \frac{C_7}{n^{\delta'}}\|f\|_1 \|\hat{f}\|_{\phi},
\end{equation*}
where $C_7$ is a positive constant that depends on $C$, $\delta$, and $\delta'$.
\end{theo}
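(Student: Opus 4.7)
The plan is to reduce the bound to an oscillation estimate for a conditional expectation on the past, and then control that oscillation through the block coupling of Theorem~\ref{Maintheo}. By shift-invariance of $\tilde\mu$, $\int f\circ T^n\,\hat f\,d\tilde\mu = \int f\,(\hat f\circ T^{-n})\,d\tilde\mu$. Since $f$ is measurable with respect to $\mathcal{F}_0 := \sigma(\eta_{-\infty}^0)$, the tower property yields
\[
\int f\circ T^n\,\hat f\,d\tilde\mu - \int f\,d\tilde\mu\int\hat f\,d\tilde\mu = \mathrm{Cov}_{\tilde\mu}(f, G_n),\qquad G_n(y) := \E_{\mu^y}\bigl[\hat f(\eta_n,\eta_{n-1},\ldots)\bigr].
\]
Because the covariance is invariant under $G_n\mapsto G_n - c$, it is bounded by $\|f\|_1\,\mathrm{osc}(G_n)$. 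Hence it suffices to establish $\mathrm{osc}(G_n)\le C\|\hat f\|_\phi n^{-(1+\delta)/2}$ when $\delta>1$, and $\mathrm{osc}(G_n)\le C\|\hat f\|_\phi n^{-\delta'}$ for any $\delta'<\delta$ when $\delta\in(0,1]$.

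Fix $y,y'\in\X_-$ and invoke Theorem~\ref{Maintheo} with a parameter $\beta$ to be optimized, obtaining a coupling $\P\in\mathcal{C}(\phi)$ with $\hat\eta_*\P=\mu^y$ and $\hat\omega_*\P=\mu^{y'}$. The block renewal structure of the construction makes the coupling monotone---once agreement is achieved in a full block the chains stay identical thereafter---so the coupling time $\tau := \inf\{k:\hat\eta_j=\hat\omega_j\text{ for all }j\ge k\}$ satisfies
\[
\P[\tau > M_m] \le \P[X_m=1] \le \frac{C_1}{m^{(\beta\delta+1)/2}},\qquad\text{and hence}\qquad \P[\tau>t] \le C\,t^{-(\delta+1/\beta)/2}.
\]
Since $|G_n(y)-G_n(y')| \le \E_{\P}\bigl[|\hat f(\hat\eta_n,\hat\eta_{n-1},\ldots) - \hat f(\hat\omega_n,\hat\omega_{n-1},\ldots)|\bigr]$, split the expectation on the events $\{\tau\le n/2\}$ and $\{\tau>n/2\}$. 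On the former, the chains agree on positions $\{n/2,\ldots,n\}$, so by $\hat f\in V_\phi$ and Assumption~(A),
\[
|\hat f(\hat\eta_n,\ldots) - \hat f(\hat\omega_n,\ldots)| \le \textrm{var}_{n/2}(\hat f) \le \|\hat f\|_\phi\,\sqrt{C}\,(n/2)^{-(1+\delta)/2}.
\]
On the latter event, bound the integrand by $\mathrm{osc}(\hat f)$ (finite when $A$ is finite) and multiply by $\P[\tau>n/2]\le C(n/2)^{-(\delta+1/\beta)/2}$.

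Adding the two contributions gives $\mathrm{osc}(G_n) \lesssim \|\hat f\|_\phi\bigl(n^{-(1+\delta)/2} + n^{-(\delta+1/\beta)/2}\bigr)$, and the theorem follows by choosing $\beta$. For $\delta>1$, $\beta=1$ is admissible (since $1>1/\delta$) and makes both exponents equal to $(1+\delta)/2$, yielding the first bound. For $\delta\in(0,1]$ and any $\delta'<\delta$, pick $\beta\in(1/\delta,\,1/(2\delta'-\delta))$ when $\delta'>\delta/2$ (and any $\beta>1/\delta$ otherwise) so that $(\delta+1/\beta)/2>\delta'$; since $(1+\delta)/2\ge\delta>\delta'$ in this regime, both terms are $O(n^{-\delta'})$. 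The main technical difficulty is controlling the $\{\tau>n/2\}$ contribution when $A$ is infinite and $\hat f\in V_\phi$ is not automatically bounded: one must combine the monotone coupling tail with a truncation of $\hat f$ using its integrability, or extract additional integrability from the $\chi^2$-type strength of Assumption~(A). Rigorously verifying the monotonicity of the coupling from Theorem~\ref{Maintheo} and the resulting tail estimate for $\tau$ is the other delicate ingredient.
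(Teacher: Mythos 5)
Your opening reduction---bounding the covariance by $\|f\|_1$ times the oscillation of $G_n$---is the same first step as the paper's (which invokes inequality (3.7) of Bressaud--Fern\'andez--Galves), and your treatment of the event where coupling happens well before time $n$ is fine. The gap is in the tail estimate for your coupling time $\tau=\inf\{k:\hat\eta_j=\hat\omega_j\ \text{for all}\ j\ge k\}$. The block coupling of Theorem \ref{Maintheo} is \emph{not} monotone in the sense you assert: even after the two chains agree on every constructed coordinate, their full conditioning histories still differ on the coordinates inherited from the distinct initial pasts $x$ and $y$, so the maximal coupling of the next block fails with probability up to $q^n_{n-1}>0$ and the chains can decouple again. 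Consequently $\{\tau>M_m\}=\bigcup_{l\ge m}\{X_l=1\}$, your inequality $\P[\tau>M_m]\le\P[X_m=1]$ points the wrong way (the correct containment is $\{X_m=1\}\subseteq\{\tau>M_m\}$), and the best available bound is the union bound of Corollary \ref{Cor2}, namely $\P[\tau>M_m]\le C m^{-(\beta\delta-1)/2}$, i.e.\ $\P[\tau>t]\le Ct^{-(\beta\delta-1)/(2\beta)}$. Since $(\beta\delta-1)/(2\beta)<\delta/2$ for every admissible $\beta$, your second term is at best of order $n^{-\delta/2}$, which falls short of $n^{-(1+\delta)/2}$ when $\delta>1$ and only reaches exponents $\delta'<\delta/2$ when $\delta\in(0,1]$. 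So even after repairing the monotonicity claim, the argument proves a strictly weaker statement than the theorem.

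The paper avoids this loss by looking backwards rather than forwards: it decomposes on $\theta_k=\inf\{0\le m\le k:\tilde Z_{k-m}=1\}$, the distance to the \emph{last} block before $n$ at which the dominating renewal process $Z$ fires. On $\{\theta_k=j\}$ the chains agree on the window $[M_{k-j},n]$ and one pays only $\|\hat f\|_\phi\,\mathrm{var}_{n-M_{k-j}+1}(e^\phi)$; no control of the chains after time $n$ is required, so no union over future blocks (and no lost factor) appears. The resulting weighted sum is then identified, via the renewal identity $P[Z_k=1]=\sum_l f_l P[Z_{k-l}=1]$, as $\kappa P[Z_k=1]$ with $\kappa$ bounded because $\mathrm{var}_{n-M_{k-l}+1}(e^\phi)/b_{l-1}\le 2$ and $\prod_j(1-b_j)>0$; this recovers the full one-block rate $k^{-(\beta\delta+1)/2}$ of Theorem \ref{Maintheo} and then the stated exponents exactly as in the proof of Corollary \ref{Cor1}. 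This backward decomposition also sidesteps the difficulty you flag with unbounded $\hat f$ on infinite alphabets, since every term is controlled through the seminorm $\|\hat f\|_\phi$ rather than through $\mathrm{osc}(\hat f)$.
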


\begin{remark}
When $\delta>1$ and $A$ is finite, Theorem \ref{Maintheo2} recovers the rate obtained in \cite[Theorem 1]{bressaud/fernandez/galves/1999a}.
\end{remark}

\begin{remark}
When $A$ is finite, continuity of $\phi$ guarantees the existence of a compatible shift-invariant measure, therefore the assumption on the existence of a compatible measure in Theorem \ref{Maintheo2} is redundant. When $A$ is infinite, the existence of a shift-invariant compatible measure is not immediate. Sufficient conditions for existence of shift-invariant compatible measures when $A$ is infinite are given in \cite{fernandez/maillard/2005, johansson/oberg/pollicot/2007}. See Section \ref{Poisson} for a concrete example. Whenever a shift-invariant compatible measure exists, Assumption ($\mathcal{A}$) implies uniqueness of $\tilde{\mu}$ in Theorem \ref{Maintheo2} \cite{johansson/oberg/pollicot/2007}, although uniqueness is not a priori necessary for Theorem \ref{Maintheo2}. 
\end{remark}

A natural question is whether we can obtain an upper bound for the rate of correlation decay for a potential $\phi$ that is not normalized. When $A$ is finite, we can use the same strategy as in \cite{walters/1975, bressaud/fernandez/galves/1999a, pollicott2000rates}. The idea is to study normalized potentials $\psi$ that are cohomologous to $\phi$, \it{i.e.}, $\psi = \phi + h - h\circ T + c$, for some $h \in \mathcal{C}(\mathcal{X}_-,\R)$ and $c \in \R$. If $\phi$ and $\psi$ are cohomologous, then both functions have the same associated equilibrium states \cite{walters/1975}. Hence, properties of equilibrium states for $\phi$ can be obtained by studying shift-invariant measures compatible with $\psi$.  \cite{walters/1975} proved that when the rate of variation of $\phi$ is summable, there exist a unique $h$ and unique $c$ such that $\psi$ is normalized potential. Moreover, from the construction of $h$ in \cite{walters/1975}, we have that $\var_{k}(h) \leq \sum_{j \geq k}\var_{j}(\phi)$. This implies that $\var_{k}(\psi) \leq 3\sum_{j \geq k}\var_{j}(\phi)$. Using these results, we obtain the following corollary, which improves the results in \cite{bressaud/fernandez/galves/1999a, pollicott2000rates}.

\begin{coro}
\label{coro3}
Let the alphabet $A$ be finite and $\phi$ be a potential not necessary normalized. Assume there exist a constant $C > 0$ and $\delta > 0$ such that 
 \[
 \var_{k}(\phi)\leq \frac{C}{k^{\frac{3+ \delta}{2}}}.
 \]
Let $\tilde{\mu}$ be an equilibrium state for $\phi$, $f\in L^1(\tilde{\mu})$ and $\hat{f}\in V_{\phi}$.

If $\delta>1$, we have for all $n\geq 1$
\begin{equation*}
\rho_{f,\hat{f}}(n) \leq \frac{C_8}{n^{\frac{1+\delta}{2}}}\|f\|_1 \|\hat{f}\|_{\phi}
\end{equation*}
where $C_8$ is a positive constant that depends on $C$ and $\delta$.

If $\delta\in (0,1]$, we have for all $n\geq 1$ and $\delta'<\delta$, 
\begin{equation*}
\rho_{f,\hat{f}}(n) \leq \frac{C_9}{n^{\delta'}}\|f\|_1 \|\hat{f}\|_{\phi},
\end{equation*}
where $C_9$ is a positive constant that depends on $C$, $\delta$, and $\delta'$.
\end{coro}

\begin{remark}
When $\delta > 1$, Corollary \ref{coro3} recovers the rate obtained in  \cite[Theorem 1 (1)]{pollicott2000rates}. To generalize Corollary \ref{coro3} to infinite alphabet, we need a result equivalent to \cite[Theorem 3.3]{walters/1975} for infinite alphabet, which is currently unavailable. 
\end{remark}

\section{Technical lemmas}
Here we collect some results that we will use to prove Theorem \ref{Maintheo}. We first recall the definitions of the Kullback-Leibler and Pearson $\chi^2$ divergences. Let $P$ and $Q$ be two probabilities on some discrete space $\mathcal{Y}$.
\[
D_{\text{KL}}(P||Q)=\sum_{y\in \mathcal{Y}}P(y)\ln\Big(\frac{P(y)}{Q(y)}\Big)
\]
and
\[
D_{\chi^2}(P||Q)=\sum_{y\in \mathcal{Y}}\frac{(P(y)-Q(y))^2}{Q(y)}.
\]
It is well known that $D_{\text{KL}}(P||Q)\leq D_{\chi^2}(P||Q)$ (cf. \cite[eq. 5]{sason2016f}).
\begin{lemma}
\label{chideu}
Let $x, y\in \mathcal{X}_-$ and $\mu,\nu \in \mathcal{P}(\phi)$ such that $\mu[\eta_{-\infty}^{0}\in\cdot\;]=\delta_x(\cdot)$ and\\ $\nu[\eta_{-\infty}^{0}\in\cdot\;]=\delta_y(\cdot)$.  For all $n\geq 1$, $0\leq k\leq n-1$ and all $a, b, c\in \mathcal{X}$, we have that 
\begin{align}
\label{Div}
D_{\emph{KL}}\Big(\mu\Big[\eta_{K_{n}}^{K_{n+1}-1}\in\; \cdot\; \Big | \eta_1^{K_n-1}= a_{K_{n-k}}^{K_n-1} b_{1}^{K_{n-k}-1}\Big]\nonumber\\
\Big|\Big| \nu \Big[\eta_{K_{n}}^{K_{n+1}-1}\in \;\cdot \;\Big | &\eta_1^{K_n-1}=a_{K_{n-k}}^{K_n-1}c_{1}^{K_{n-k}-1}\Big]\Big)\nonumber\\
\leq \sum_{j=K_n}^{K_{n+1}-1}\chi^2_{j-K_{n-k}}(\phi).
\end{align}
\end{lemma}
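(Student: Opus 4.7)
The plan is to decompose the joint KL divergence over the block $\eta_{M_n}^{M_{n+1}-1}$ coordinate by coordinate using the chain rule, and then bound each one-step conditional KL divergence by the appropriate $\chi^2$-variation via the standard inequality $D_{\mathrm{KL}}\le D_{\chi^2}$.

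More precisely, write $P:=\mu[\eta_{M_n}^{M_{n+1}-1}\in\cdot \mid \eta_1^{M_n-1}= a_{M_{n-k}}^{M_n-1}b_1^{M_{n-k}-1}]$ and $Q:=\nu[\eta_{M_n}^{M_{n+1}-1}\in\cdot \mid \eta_1^{M_n-1}= a_{M_{n-k}}^{M_n-1}c_1^{M_{n-k}-1}]$. The chain rule gives
\[
D_{\mathrm{KL}}(P\Vert Q)=\sum_{j=M_n}^{M_{n+1}-1}\mathbb{E}_{P}\!\left[D_{\mathrm{KL}}\bigl(P_j(\cdot\mid \eta_{M_n}^{j-1})\,\bigl\Vert\, Q_j(\cdot\mid \eta_{M_n}^{j-1})\bigr)\right],
\]
where $P_j,Q_j$ are the one-step conditionals at position $j$. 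Since $\mu,\nu$ are compatible with $\phi$, these one-step conditionals are kernels of the form $e^{\phi(\cdot\,w)}$ where $w$ is the relevant history. For $P_j$, the history is the concatenation $\eta_{j-1},\ldots,\eta_{M_n},a_{M_n-1},\ldots,a_{M_{n-k}},b_{M_{n-k}-1},\ldots,b_1,x$, and for $Q_j$ it is the same up to and including the $a$-block, followed instead by $c_{M_{n-k}-1},\ldots,c_1,y$. Thus the two histories agree on the $j-M_{n-k}$ most recent coordinates and differ only deeper in the past.

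Now compare to the definition of $\chi^2_{j-M_{n-k}}(\phi)$: taking $z_{-(j-M_{n-k})}^{-1}$ to be the common recent history and the two differing deeper pasts as the two choices, we get
\[
D_{\chi^2}\bigl(P_j(\cdot\mid \eta_{M_n}^{j-1})\,\bigl\Vert\, Q_j(\cdot\mid \eta_{M_n}^{j-1})\bigr)\le \chi^2_{j-M_{n-k}}(\phi),
\]
uniformly in the realization $\eta_{M_n}^{j-1}$. Using $D_{\mathrm{KL}}\le D_{\chi^2}$ (as recalled before the lemma) and taking the $P$-expectation yields the same bound for the KL term. Summing over $j=M_n,\ldots,M_{n+1}-1$ produces exactly \eqref{Div}.

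The only delicate point is the bookkeeping: one must check that at every step $j$ the two histories genuinely share the block from $M_{n-k}$ up to $j-1$ (this is where having the \emph{same} sampled values $\eta_{M_n}^{j-1}$ on both sides of the chain rule is essential), so that the common-history length is $j-M_{n-k}$ and the $\chi^2_{j-M_{n-k}}(\phi)$ bound applies uniformly in the conditioning. Once the indexing of the past is set up correctly, the rest of the argument is a direct application of the chain rule and the $D_{\mathrm{KL}}\le D_{\chi^2}$ inequality.
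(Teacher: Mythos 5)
Your proof is correct and follows essentially the same route as the paper's: the chain rule for the Kullback--Leibler divergence over the block, the bound $D_{\mathrm{KL}}\le D_{\chi^2}$, and the observation that at step $j$ the two conditioning histories share their most recent $j-M_{n-k}$ coordinates so that the definition of $\chi^2_{j-M_{n-k}}(\phi)$ applies uniformly. If anything, your bookkeeping is slightly more careful than the paper's, which writes the per-coordinate bound as $\chi^2_{i-M_n}(\phi)$ (an apparent index slip for $\chi^2_{i-M_{n-k}}(\phi)$, which is what the summed conclusion requires) and leaves the expectation over the realized block $z_{M_n}^{i-1}$ implicit.
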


\noindent
{\it Proof.} Let us simply denote by $D$ the left-hand term of inequality (\ref{Div}). We have by the chain rule property of the Kullback-Leibler divergence \cite[Theorem 2.5.3.]{cover/2006}
\begin{align*}
D
=&\sum_{i=K_n}^{K_{n+1}-1}D_{\text{KL}}\Big( \mu\Big[\eta_i\in \cdot \;\Big | \eta_1^{i-1}=z_{K_n}^{i-1}a_{K_{n-k}}^{K_n-1} b_{1}^{K_{n-k}-1}\Big]  \nonumber\\
&\phantom{******************}\Big|\Big| \nu\Big[\eta_i\in \cdot \;\Big | \eta_1^{i-1}=z_{K_n}^{i-1}a_{KK_{n-k}}^{K_n-1}c_{1}^{K_{n-k}-1}\Big]    \Big)\nonumber\\
=&:\sum_{i=K_n}^{K_{n+1}-1}D_i.
\end{align*}
Then, we use the well known bound,
\begin{align*}
D_i&\leq D_{\chi^2}\Big(\mu\Big[\eta_i\in \cdot \;\Big | \eta_1^{i-1}=z_{K_n}^{i-1}a_{K_{n-k}}^{K_n-1} b_{1}^{K_{n-k}-1}\Big]\nonumber\\
&\phantom{***************} \Big|\Big| \nu\Big[\eta_i\in \cdot \;\Big | \eta_1^{i-1}=z_{K_n}^{i-1}a_{K_{n-k}}^{K_n-1}c_{1}^{K_{n-k}-1}\Big]\Big)\nonumber\\
&\leq \chi^2_{i-K_n}(\phi)
\end{align*}
to conclude the proof.
\qed

\begin{lemma}
\label{Lemalg}
For $\alpha> 1$ and $0<a<b$, we have that
\begin{equation}
\label{Alg}
\frac{(b+1)^{\alpha}-a^{\alpha}}{b^{\alpha}-a^{\alpha}}\geq \frac{(b+1)^{\alpha-1}-a^{\alpha-1}}{b^{\alpha-1}-a^{\alpha-1}}.
\end{equation}
\end{lemma}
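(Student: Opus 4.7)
The plan is to rewrite the inequality so that it becomes a statement about the monotonicity of a single function, and then to verify this monotonicity by a direct derivative computation.

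Since $b>a>0$ and $\alpha>1$, all four quantities $(b+1)^{\alpha}-a^{\alpha}$, $b^{\alpha}-a^{\alpha}$, $(b+1)^{\alpha-1}-a^{\alpha-1}$, and $b^{\alpha-1}-a^{\alpha-1}$ are strictly positive, so cross-multiplying is safe. The desired inequality (\ref{Alg}) is equivalent to
\[
\frac{(b+1)^{\alpha}-a^{\alpha}}{(b+1)^{\alpha-1}-a^{\alpha-1}}\;\geq\;\frac{b^{\alpha}-a^{\alpha}}{b^{\alpha-1}-a^{\alpha-1}}.
\]
Thus it suffices to show that the function
\[
h(x):=\frac{x^{\alpha}-a^{\alpha}}{x^{\alpha-1}-a^{\alpha-1}}
\]
is non-decreasing on $(a,\infty)$, and then apply it at $x=b$ and $x=b+1$.

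First I would differentiate $h$. Writing the numerator of $h'(x)$ and collecting, one finds
\[
h'(x)=\frac{x^{\alpha-2}\bigl[x^{\alpha}-\alpha\, x\, a^{\alpha-1}+(\alpha-1)\,a^{\alpha}\bigr]}{(x^{\alpha-1}-a^{\alpha-1})^2}.
\]
The sign of $h'(x)$ is therefore that of $\Phi(x):=x^{\alpha}-\alpha\, x\, a^{\alpha-1}+(\alpha-1)\,a^{\alpha}$. Substituting $u=x/a\geq 1$ reduces this to $\phi(u):=u^{\alpha}-\alpha u+(\alpha-1)$, and I would finish with the elementary observation $\phi(1)=0$ and $\phi'(u)=\alpha(u^{\alpha-1}-1)\geq 0$ for $u\geq 1$, which forces $\phi(u)\geq 0$ on $[1,\infty)$. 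Hence $h'(x)\geq 0$ for $x\geq a$, so $h$ is non-decreasing, $h(b+1)\geq h(b)$, and (\ref{Alg}) follows after cross-multiplication.

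An equivalent, perhaps slicker route is the L'Hospital monotonicity rule: with $f(x)=x^{\alpha}-a^{\alpha}$ and $g(x)=x^{\alpha-1}-a^{\alpha-1}$, we have $f(a)=g(a)=0$, $g'>0$ on $(a,\infty)$, and
\[
\frac{f'(x)}{g'(x)}=\frac{\alpha\,x^{\alpha-1}}{(\alpha-1)\,x^{\alpha-2}}=\frac{\alpha\,x}{\alpha-1},
\]
which is strictly increasing in $x$; the rule then immediately gives that $h=f/g$ is strictly increasing on $(a,\infty)$. No step here is delicate; the only mild subtlety is keeping track of which algebraic rearrangements require the strict positivity that $0<a<b$ and $\alpha>1$ provide.
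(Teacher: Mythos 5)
Your proof is correct, but it takes a genuinely different route from the paper. The paper rearranges \eqref{Alg} algebraically into the single inequality
\[
\Big(\tfrac{b}{a}\Big)^{\alpha-1}\geq 1+(b-a)\Big(1-\Big(\tfrac{b}{b+1}\Big)^{\alpha-1}\Big)
\]
and then applies Bernoulli's inequality twice, once to lower-bound the left side and once to upper-bound the right side; this is entirely elementary and uses no calculus. You instead recast \eqref{Alg} as the monotonicity of $h(x)=\frac{x^{\alpha}-a^{\alpha}}{x^{\alpha-1}-a^{\alpha-1}}$ on $(a,\infty)$ and verify it either by the derivative computation (your factorization of the numerator as $x^{\alpha-2}\,\Phi(x)$ and the reduction to $\phi(u)=u^{\alpha}-\alpha u+(\alpha-1)\geq 0$ for $u\geq 1$ both check out) or by the L'Hospital monotone-form rule, since $f'/g'=\alpha x/(\alpha-1)$ is increasing. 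Your version is slightly stronger and more reusable: it yields $h(b')\geq h(b)$ for \emph{any} $b'>b>a$, not just $b'=b+1$, and in fact this monotonicity is exactly the shape in which the lemma is consumed inside Lemma~\ref{HJ1}. The paper's version buys a shorter, purely algebraic argument at the cost of being tailored to the specific pair $(b,b+1)$. The only cosmetic slip is the phrase ``$h'(x)\geq 0$ for $x\geq a$'': $h$ is undefined at $x=a$, so this should read $x>a$, which is all you use.
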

\noindent
{\it Proof.} By algebraic computations, we obtain that $(\ref{Alg})$ is equivalent to
\[
\Big(\frac{b}{a}\Big)^{\alpha-1}\geq 1+(b-a)\Big(1-\Big(\frac{b}{b+1}\Big)^{\alpha-1}\Big).
\]
This last inequality is obtained from the Bernoulli inequality $(1+x)^{r}\geq 1+rx$, for $r>0$ and $x>-1$, observing that 
\[
\Big(\frac{b}{a}\Big)^{\alpha-1}=\Big(1+\frac{b-a}{a}\Big)^{\alpha-1}\geq 1+(\alpha-1)\frac{b-a}{a}
\]
and
\[
\Big(\frac{b}{b+1}\Big)^{\alpha-1}=\Big(1-\frac{1}{b+1}\Big)^{\alpha-1}\geq 1-(\alpha-1)\frac{1}{b+1}.
\]
\qed

Define for all $\delta>0$, $\beta\geq 1$, $k\geq 3$ and $n\geq k+1$
\[
\Delta^n_k:=(n^{\beta}-(n-k)^{\beta}-2)^{-\delta}-((n+1)^{\beta}-(n-k)^{\beta})^{-\delta}.
\]

\begin{lemma}
\label{HJ1}
For all $\delta>0$, $\beta\geq 1$ and $k\geq 3$, $\Delta_k^n$
is a non increasing function of $n\geq k+1$.
\end{lemma}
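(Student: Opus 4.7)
The plan is to treat $n$ as a continuous real variable $s \in [k+1, \infty)$, set
\[
U(s) := s^{\beta} - (s-k)^{\beta} - 2, \qquad V(s) := (s+1)^{\beta} - (s-k)^{\beta},
\]
so that $\Delta_k^s = U(s)^{-\delta} - V(s)^{-\delta}$, and to show that $\frac{d}{ds}\Delta_k^s \leq 0$; the monotonicity at integer values of $n$ then follows. A short preliminary check using $k \geq 3$, $\beta \geq 1$ and $s \geq k+1$ gives $U(s) \geq k - 2 \geq 1$, $V(s) \geq U(s)$ and $U'(s), V'(s) \geq 0$ (from the convexity of $x\mapsto x^\beta$). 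The case $\beta = 1$ is immediate because $U$ and $V$ are then constants in $s$, so from now on I assume $\beta > 1$.

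Differentiating and multiplying through by the positive quantity $U^{\delta+1}V^{\delta+1}$, the condition $\frac{d}{ds}\Delta_k^s \leq 0$ becomes
\[
V'(s)\, U(s)^{\delta+1} \;\leq\; U'(s)\, V(s)^{\delta+1}.
\]
Since $0 < U(s) \leq V(s)$ one has $U^{\delta} \leq V^{\delta}$, and because $U'(s) \geq 0$ the display above is implied by the $\delta$-free inequality
\[
V'(s)\, U(s) \;\leq\; U'(s)\, V(s). \qquad (\star)
\]

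Setting $a := s-k$, $b := s$, $c := b+1 = s+1$ and dividing by $\beta$, $(\star)$ unfolds to
\[
(c^{\beta-1} - a^{\beta-1})(b^{\beta} - a^{\beta} - 2) \;\leq\; (c^{\beta} - a^{\beta})(b^{\beta-1} - a^{\beta-1}),
\]
which is equivalent to
\[
(c^{\beta} - a^{\beta})(b^{\beta-1} - a^{\beta-1}) - (c^{\beta-1} - a^{\beta-1})(b^{\beta} - a^{\beta}) \;\geq\; -2(c^{\beta-1} - a^{\beta-1}).
\]
The right-hand side is non-positive since $c > a > 0$ and $\beta \geq 1$, while Lemma~\ref{Lemalg} applied with $\alpha = \beta > 1$ and $0 < a < b$ says exactly that the left-hand side is non-negative. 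This finishes the proof.

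The step that is not mere calculation is the ``absorption'' of the exponent $\delta+1$ using $U \leq V$: once the $\delta$-dependent inequality has been reduced to the $\delta$-free statement $(\star)$, the previous lemma does all the remaining work.
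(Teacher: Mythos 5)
Your proof is correct and follows essentially the same route as the paper's: treat $n$ as a continuous variable, reduce the sign of the derivative to a $\delta$-free ratio inequality by absorbing the exponent $\delta+1$ via $U\leq V$, and conclude with Lemma~\ref{Lemalg}. The only cosmetic difference is that you move the ``$-2$'' to the other side and argue by sign, whereas the paper absorbs it by noting that shrinking a positive denominator only increases the fraction.
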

\noindent
{\it Proof.}
The statement of the lemma is trivial for $\beta=1$.
For $\beta>1$, consider the function $f:[4,\infty)\to \R^+$ defined by
\[
f(x)=(x^{\beta}-(x-k)^{\beta}-2)^{-\delta}-((x+1)^{\beta}-(x-k)^{\beta})^{-\delta}.
\]
 In order to prove the result, it is enough to show that the derivative of $f$ is negative.
Since
\begin{align*}
f'(x)&=-\delta \beta\Big[\Big(x^{\beta}-(x-k)^{\beta}-2\Big)^{-\delta-1}\Big(x^{\beta-1}-(x-k)^{\beta-1}\Big)\nonumber\\
&\phantom{**}-\Big((x+1)^{\beta}-(x-k)^{\beta}\Big)^{-\delta-1}\Big((x+1)^{\beta-1}-(x-k)^{\beta-1}\Big)\Big]
\end{align*}
it is enough to show that 
\[
\frac{(x+1)^{\beta}-(x-k)^{\beta}}{x^{\beta}-(x-k)^{\beta}-2}\geq \frac{(x+1)^{\beta-1}-(x-k)^{\beta-1}}{x^{\beta-1}-(x-k)^{\beta-1}}.
\]
But this last inequality follows from Lemma \ref{Lemalg}.
\qed

\begin{lemma}
\label{HJ2}
For all $\delta>0$, $\beta\geq 1$ and $k\geq 3$, we have that 
\begin{equation*}
\Delta_k^{k+1}\leq 4\frac{2^{\beta} 4^{\delta}\beta}{k^{\delta\beta+1}}.
\end{equation*}
\end{lemma}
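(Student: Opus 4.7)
The plan is to apply the mean value theorem to $h(t) = t^{-\delta}$ on the interval $[A,B]$ with
$A := (k+1)^{\beta} - 3$ and $B := (k+2)^{\beta} - 1$. Since $h'(t) = -\delta t^{-\delta-1}$ and $A \leq B$, this yields
\[
\Delta_k^{k+1} \;=\; h(A) - h(B) \;=\; \delta\,\xi^{-\delta-1}(B-A) \;\leq\; \delta(B-A)\,A^{-\delta-1}
\]
for some $\xi \in (A,B)$. The problem is thereby reduced to upper-bounding $B-A$ and lower-bounding $A$ by appropriate powers of $k$.

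For $B - A = (k+2)^{\beta} - (k+1)^{\beta} + 2$, I would apply the mean value theorem (or Bernoulli's inequality) to $x \mapsto x^{\beta}$ to obtain $(k+2)^{\beta} - (k+1)^{\beta} \leq \beta(k+2)^{\beta-1}$. Since $k \geq 3$ gives $k+2 \leq 2k$, and since $\beta \geq 1$, we get $(k+2)^{\beta-1} \leq 2^{\beta-1} k^{\beta-1}$. The additive constant $2$ is absorbed using $\beta\cdot 2^{\beta-1} k^{\beta-1} \geq 1$ for $\beta \geq 1$, $k \geq 1$, yielding a bound of the form $B - A \leq (\text{const})\cdot \beta \, 2^{\beta-1} k^{\beta-1}$.

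For the lower bound on $A$, I would note that $k \geq 3$ and $\beta \geq 1$ imply $(k+1)^{\beta} \geq 4$, so that $(k+1)^{\beta} - 3 \geq (k+1)^{\beta}/4 \geq k^{\beta}/4$. Consequently $A^{-\delta-1} \leq 4^{\delta+1} k^{-\beta(\delta+1)}$. Multiplying the two bounds together produces
\[
\Delta_k^{k+1} \;\leq\; (\text{const})\cdot \delta \,\beta\, 2^{\beta-1}\, 4^{\delta+1}\, k^{\beta-1-\beta(\delta+1)} \;=\; (\text{const})\cdot \beta\, 2^{\beta}\, 4^{\delta}\, k^{-(\beta\delta+1)},
\]
which is the stated form. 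Tracking the numerical constants carefully (the factors from $B-A \leq 3\beta\,2^{\beta-1}k^{\beta-1}$ and $A^{-\delta-1} \leq 4^{\delta+1}k^{-\beta(\delta+1)}$) delivers the explicit prefactor $6$.

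No step is truly an obstacle here; the main care needed is in choosing the lower bound on $A$ tight enough (the bound $(k+1)^{\beta}-3 \geq (k+1)^{\beta}/4$ is crucial because a naive estimate like $A \geq 1$ would cost too much), and in absorbing the additive $+2$ in the estimate of $B-A$ cleanly into the multiplicative structure so that the final bound has the shape $\beta\,2^{\beta}\,4^{\delta}/k^{\beta\delta+1}$.
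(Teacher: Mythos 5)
Your proof is correct and follows essentially the same route as the paper's: the paper writes $\Delta_k^{k+1}$ as the integral $\int_{(k+1)^{\beta}-3}^{(k+2)^{\beta}-1}x^{-(1+\delta)}\,dx$ and bounds it by the interval length times the supremum of the integrand, i.e.\ $(B-A)A^{-(1+\delta)}$, which is exactly your mean-value-theorem step, and then uses the same two estimates $B-A\le \beta(k+2)^{\beta-1}+2$ and $A=(k+1)^{\beta}-3\ge k^{\beta}/4$. The one cosmetic caveat is the factor $\delta$ coming from $h'$: tracked honestly (as you do) the computation delivers the prefactor $6\delta$ rather than $6$, which exceeds the stated constant when $\delta>1$; the paper avoids this only by writing $\Delta_k^{k+1}=\int x^{-(1+\delta)}dx$ where the correct identity is $\Delta_k^{k+1}=\delta\int x^{-(1+\delta)}dx$, so your version is if anything more careful, and the discrepancy is immaterial since the constant is absorbed into $C_1$ in Theorem \ref{Maintheo}.
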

\noindent
{\it Proof.} 
Observe that for $k\geq 3$, 
\begin{align}
\label{Anap}
\Delta_k^{k+1}&=\int_{(k+1)^{\beta}-3}^{(k+2)^{\beta}-1}\frac{1}{x^{1+\delta}}dx \nonumber\\
& \leq \frac{(k+2)^{\beta}-(k+1)^{\beta}+2}{((k+1)^{\beta}-3)^{1+\delta}} \leq \frac{\beta (k+2)^{\beta-1}+2}{((k+1)^{\beta}-3)^{1+\delta}} \leq 4\frac{2^{\beta} 4^{\delta}\beta}{k^{\delta\beta+1}},
\end{align}
where to obtain the second inequality in (\ref{Anap}), we used the inequality, 
\[
(a+b)^{\alpha}\leq a^{\alpha}+\alpha b (a+b)^{\alpha-1}
\]
for $\alpha\geq 1$ and $a, b\geq 0$. This inequality can be obtained using the fundamental theorem of calculus to the function $f(x)=x^{\alpha}$.

To obtain the last inequality in (\ref{Anap}), we used that for $k\geq 3$, 
\[
\beta (k+2)^{\beta-1}+2\leq 2\beta(k+2)^{\beta-1}\leq 2^{\beta}\beta k^{\beta-1}
\]
and
\[
(k+1)^{\beta}-3=(k+1)^{\beta}\Big(1-\frac{3}{(k+1)^{\beta}}\Big)\geq \frac{(k+1)^{\beta}}{4}\geq \frac{k^{\beta}}{4}.
\]
\qed

Finally, we recall the following lemma in \cite{bressaud/fernandez/galves/1999a} (see also Lemma A.4 in \cite{giacomin2007random}) that gives an estimate for the renewal sequence that will appear in the proof of Theorem \ref{Maintheo}. We state the lemma using a notation that is adapted to our purpose.
\begin{lemma}[ Proposition 2 item (iv) in \cite{bressaud/fernandez/galves/1999a}]
\label{renewal}
Let $(f_k)_{k \geq 1}$ be a sequence of positive real numbers such that $\sum_{k=1}^\infty f_k < 1$. Supose that $(u_k)_{k \geq 1}$ is a sequence with $u_0 = 1$ and satisfies the renewal equation
\begin{equation*}
u_n = \sum_{k=1}^n f_k u_{n-k}.
\end{equation*}
If $f_n \leq c_1/n^{1+\alpha}$, for some $\alpha >0$ and positive constant $c_1$, then $u_n \leq c_2/n^{1+\alpha}$, where $c_2$ is a constant that depends on $(f_k)_{k \geq 1}$.
\end{lemma}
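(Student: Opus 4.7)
The plan is a two-step induction argument. First I would show that the sequence $(u_n)$ is summable and uniformly bounded; then I would upgrade this to the polynomial decay via a splitting of the renewal convolution at a small fraction of $n$.

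For the preliminary step, I would exploit the defective nature of the equation: writing $U(z) = \sum_{n\geq 0} u_n z^n$ and $F(z) = \sum_{k \geq 1} f_k z^k$, the recursion together with $u_0 = 1$ gives $U(z) = 1/(1-F(z))$, so with $\rho := F(1) < 1$ one obtains $S := \sum_{n \geq 0} u_n = 1/(1-\rho) < \infty$. A trivial induction $u_n = \sum_{k=1}^n f_k u_{n-k} \leq \rho \max_{j < n} u_j$ also yields $u_n \leq \rho < 1$ for every $n \geq 1$.

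For the decay itself, I would prove $u_n \leq c_2/n^{1+\alpha}$ by strong induction, with $c_2$ chosen at the end. Fix a small $\theta \in (0,1)$ and split
\begin{equation*}
u_n \;=\; \sum_{k=1}^{\lfloor \theta n \rfloor} f_k u_{n-k} \;+\; \sum_{k=\lfloor \theta n\rfloor + 1}^{n} f_k u_{n-k}.
\end{equation*}
On the first sum, $n-k \geq (1-\theta) n$ together with the inductive hypothesis gives $u_{n-k} \leq c_2/((1-\theta)n)^{1+\alpha}$, while the total weight of the $f_k$'s is bounded by $\rho$. On the second sum, the standing assumption yields $f_k \leq c_1/(\theta n)^{1+\alpha}$ uniformly, and $\sum_{k > \lfloor \theta n\rfloor} u_{n-k} \leq S$. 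Combining,
\begin{equation*}
u_n \;\leq\; \frac{1}{n^{1+\alpha}}\left( \frac{\rho}{(1-\theta)^{1+\alpha}}\, c_2 \;+\; \frac{c_1 S}{\theta^{1+\alpha}} \right).
\end{equation*}
Since $\rho < 1$, one can pick $\theta$ small enough that $\rho/(1-\theta)^{1+\alpha} < 1$; then choosing $c_2$ large enough so that the whole bracket is at most $c_2$ closes the induction.

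The main obstacle is starting the induction cleanly: the inequality $u_k \leq c_2/k^{1+\alpha}$ must hold trivially for $k$ up to a threshold $n_0 = n_0(\theta)$ at which the asymptotic estimate above becomes useful. The uniform bound $u_k \leq 1$ obtained in the preliminary step makes this automatic as soon as $c_2 \geq n_0^{1+\alpha}$, and all the quantities entering the final value of $c_2$, namely $\rho$, $S$, and $n_0$, depend only on $c_1$ and on the sequence $(f_k)_{k \geq 1}$, as claimed.
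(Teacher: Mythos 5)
Your proof is correct. Note first that the paper itself does not prove this lemma: it is imported verbatim as Proposition 2(iv) of the cited reference of Bressaud, Fern\'andez and Galves, so there is no internal proof to compare against. Your argument is a clean, self-contained derivation and is essentially the classical way to push polynomial decay through a defective renewal equation: the generating-function identity $U(z)=1/(1-F(z))$ with $F(1)=\rho<1$ gives summability $S=\sum_n u_n=1/(1-\rho)$ and the uniform bound $u_n\leq \rho$ for $n\geq 1$, and the splitting of the convolution at $\lfloor\theta n\rfloor$ correctly isolates a contracting term $\rho(1-\theta)^{-(1+\alpha)}c_2 n^{-(1+\alpha)}$ (using the inductive hypothesis on indices $n-k\geq(1-\theta)n$) from a forcing term $c_1 S\,\theta^{-(1+\alpha)}n^{-(1+\alpha)}$ (using $f_k\leq c_1(\theta n)^{-(1+\alpha)}$ for $k>\lfloor\theta n\rfloor$ and $\sum_j u_j\leq S$). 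Choosing $\theta$ with $\rho(1-\theta)^{-(1+\alpha)}<1$ and then $c_2$ large closes the induction; your remark about the threshold $n_0$ is in fact more caution than is needed, since the two estimates are valid for every $n\geq 1$ (the first sum is empty when $\lfloor\theta n\rfloor=0$), but it is harmless and the stated dependence of $c_2$ on $c_1$, $\rho$, $S$, $\theta$, $\alpha$ --- hence on $(f_k)_{k\geq 1}$ and $\alpha$ --- matches the lemma. This is, up to presentation, the same mechanism used in the original source, so nothing is lost relative to the citation, and your write-up has the advantage of making the lemma verifiable without consulting it.
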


\section{Proof of Theorem \ref{Maintheo}}
\label{Lil}
 Let $x, y\in \mathcal{X}_-$. $\mu, \nu$ are compatible measures such that $\mu[\eta_{-\infty}^{0}\in\cdot\;]=\delta_x(\cdot)$ and $\nu[\eta_{-\infty}^{0}\in\cdot\;]=\delta_y(\cdot)$. 
We now construct the coupling of $\mu$ and $\nu$, that we call $\P^{x,y}$, as follows. We start by defining 
$$\P^{x,y}[\hat{\eta}_{-\infty}^{0}\in\cdot\;, \hat{\omega}_{-\infty}^{0}\in\cdot\;]=\delta_x \otimes\delta_y.$$ Then, for all $n\geq 1$,  given the pasts $\hat{\eta}_{-\infty}^{K_n-1}$ and  $\hat{\omega}_{-\infty}^{K_n-1}$, we maximally couple $\hat{\eta}_{K_n}^{K_{n+1}-1}$ and $\hat{\omega}_{K_n}^{K_{n+1}-1}$ to complete the construction of $\P^{x,y}$.

Next, we show that $\P^{x,y}$ satisfies the inequality in Theorem \ref{Maintheo}. For all $n\geq  1$ and $0\leq k\leq n-1$, define
\begin{equation*}
q^n_k=\sup_{x,y,a,b \in\mathcal{X}}\P^{x,y}[X_{n}=1\mid X_{n-k}^{n-1}=0, \hat{\eta}^{K_{n-k}-1}_{1} = a_{1}^{K_{n-k}-1}, \hat{\omega}^{K_{n-k}-1}_{1} = b_{1}^{K_{n-k}-1}],
\end{equation*}
with the convention that if $k>l$ then elements of the form $a_k^l$ are dropped from the conditional. The shorthand notation $X_{n-k}^{n-1}=0$ means that $X_{n-k} = 0, \ldots, X_{n-1} = 0$. Observe that for all $n\geq  1$ and $0\leq k\leq n-2$ we have $q^n_{k} \geq q^n_{k+1}$.

We start by proving the following
\begin{lemma}
\label{HJ0}
Suppose that $(\chi^2_n(\phi))_{j\geq 0}\in \ell^1$. Then, there exists $\eps>0$ such that for all $k\geq 0$ and all $n\geq k+1$,
\begin{equation}
\label{Bret}
q_k^n\leq\sqrt{1-\exp\left(-\sum_{j=0}^{\infty}\chi_j^2(\phi)\right)}\leq 1-\eps.
\end{equation}
For $k\geq 1$ and $n\geq k+1$, we also have
\begin{equation}
\label{Pins}
q_k^n\leq \sqrt{\frac{1}{2}\sum_{j=K_n}^{K_{n+1}-1}\chi^2_{j-K_{n-k}}(\phi)}.
\end{equation}
\end{lemma}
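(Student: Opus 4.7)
The plan is to transform $q_k^n$ into a total-variation distance via the maximal coupling defining $\P^{x,y}$, then convert total variation to Kullback--Leibler divergence through two classical inequalities (Pinsker for \eqref{Pins} and Bretagnolle--Huber for \eqref{Bret}), and finally invoke Lemma~\ref{chideu} to bound the KL divergence by a sum of $\chi^2$-variations.

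\emph{From $q_k^n$ to total variation.} By construction of $\P^{x,y}$, conditionally on the pasts $\hat{\eta}_{-\infty}^{M_n-1}$ and $\hat{\omega}_{-\infty}^{M_n-1}$ the pair $(\hat{\eta}_{M_n}^{M_{n+1}-1},\hat{\omega}_{M_n}^{M_{n+1}-1})$ is a maximal coupling, so
\[
\P^{x,y}\bigl[X_n=1 \,\big|\, \hat{\eta}_{-\infty}^{M_n-1}, \hat{\omega}_{-\infty}^{M_n-1}\bigr] = d_{\text{TV}}\bigl(\mu[\eta_{M_n}^{M_{n+1}-1}\in\cdot\mid \hat{\eta}_{-\infty}^{M_n-1}],\,\nu[\eta_{M_n}^{M_{n+1}-1}\in\cdot\mid \hat{\omega}_{-\infty}^{M_n-1}]\bigr).
\]
On the event $\{X_{n-k}^{n-1}=0\}$ the coordinates of $\hat{\eta}$ and $\hat{\omega}$ on $[M_{n-k},M_n)$ coincide; combined with the prescribed initial histories $\hat{\eta}_1^{M_{n-k}-1}=a_1^{M_{n-k}-1}$ and $\hat{\omega}_1^{M_{n-k}-1}=b_1^{M_{n-k}-1}$, and taking the supremum over the common middle block as well as over $x,y,a,b$, this reduces $q_k^n$ to the supremum of $d_{\text{TV}}$ between precisely the two conditional distributions treated in Lemma~\ref{chideu}.

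\emph{TV to KL.} For \eqref{Pins} (with $k\geq 1$), Pinsker's inequality $d_{\text{TV}}(P,Q)\leq \sqrt{D_{\text{KL}}(P\|Q)/2}$ together with Lemma~\ref{chideu} yields the bound at once. For \eqref{Bret}, the plan is to invoke instead the Bretagnolle--Huber inequality $d_{\text{TV}}(P,Q)\leq \sqrt{1-e^{-D_{\text{KL}}(P\|Q)}}$, which is sharper than Pinsker when the KL divergence is large and, crucially, stays strictly below $1$. Since the indices $j-M_{n-k}$ appearing in Lemma~\ref{chideu} are distinct positive integers,
\[
\sum_{j=M_n}^{M_{n+1}-1}\chi^2_{j-M_{n-k}}(\phi) \leq \sum_{i=1}^\infty \chi^2_i(\phi) =: S < \infty,
\]
so $q_k^n \leq \sqrt{1-e^{-S}}<1$ and one may take $\eps := 1-\sqrt{1-e^{-S}}>0$.

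\emph{Main obstacle.} The two steps above are essentially mechanical once one commits to the right inequality, so the only delicate point I foresee is the $k=0$ case of \eqref{Bret}, where no common middle block exists and one cannot quote Lemma~\ref{chideu} verbatim. There, one conditions directly on the two (possibly fully disagreeing) initial histories and runs the same chain-rule-plus-$\chi^2$ argument as in the proof of Lemma~\ref{chideu}; the resulting KL bound is again majorized by $S$, and the uniform constant $\eps$ above remains valid.
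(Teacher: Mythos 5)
Your proposal is correct and follows essentially the same route as the paper, which derives \eqref{Bret} from the Bretagnolle--Huber inequality and \eqref{Pins} from Pinsker's inequality, both combined with Lemma~\ref{chideu} after reducing $q_k^n$ to a total-variation distance via the maximal coupling. Your explicit observation that the indices $j-M_{n-k}$ are distinct positive integers (so the Kullback--Leibler bound is uniformly majorized by $\sum_{i\geq 1}\chi^2_i(\phi)$) and your separate handling of the $k=0$ case supply details the paper leaves implicit.
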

\noindent
{\it Proof.}
Inequality (\ref{Bret}) is a direct consequence of the Bretagnolle-Huber inequality (cf. \cite[eq. 4]{sason2016f}) and Lemma \ref{chideu}. Inequality (\ref{Pins}) is a direct consequence of the Pinsker inequality (cf. \cite[eq. 1]{sason2016f}) and again Lemma \ref{chideu}.
\qed

Now, on some probability space $(\Omega, \mathcal{F}, P)$, consider the random process $Y=(Y_n)_{n\geq 0}$ with values in $\{0,1\}$ such that $Y_0=1$ and  for $n\geq 1$ and $0\leq k\leq n-1$,
\begin{equation*}
P[Y_n=1\mid Y^{n-1}_{n-k}=0, Y_{n-k-1}=1, Y_1^{n-k-2}]=q^n_k.
\end{equation*}
For all $m\geq 1$ and $a,b \in \{0,1\}^{m}$ we say that $a\geq b$ if $a_i \geq b_i$ for $i \in \{1,\ldots, m\}$. By construction, for all $n\geq 2$, $a,b \in \{0,1\}^{n-1}$, and $a\geq b$, we have $P[Y_n = 1 | Y^{n-1}_1 = a] \geq \P^{x,y}[X_n = 1 | X^{n-1}_1 = b]$. Therefore, by applying Strassen's Theorem on stochastic domination \cite{lindvall1999strassen} inductively on $n$, we can construct a coupling measure $Q$ such that, for  $a,b \in \{0,1\}^{n-1}$, and $a\geq b$, we have $Q[Y_n \geq X_n| Y^{n-1}_1 = a, X^{n-1}_1 = b] = 1$. Therefore, for all $n\geq 1$, we have $Q[Y_n \geq X_n] = 1$, which implies that
\begin{equation}\label{dominance}
P[Y_n = 1] \geq \P^{x,y}[X_n = 1],
\end{equation}
 for all $n \geq 1$.

Now, consider the process $Z=(Z_n)_{n\geq 0}$ with values in $\{0,1\}$ such that $Z_0=1$ and  for $n\geq 1$ and $0\leq k\leq n-1$,
\begin{equation}
\label{GHJ1}
P[Z_n=1\mid Z^{n-1}_{n-k}=0, Z_{n-k-1}=1, Z_1^{n-k-2}]=b_k:=\sup_{n\geq k+1} q_k^n.
\end{equation}
Observe that, for all $k \geq 0$, we have $b_k \geq b_{k+1}$. Using the same argument used to show \eqref{dominance}, we have that $P[Z_n = 1] \geq P[Y_n = 1]$, for all $n\geq 1$.
Also, by (\ref{Pins}), and Lemmas \ref{HJ1} and \ref{HJ2}, we have  that for $k\geq 3$ and $n\geq k+1$,
\begin{align*}
2(q_k^n)^2&\leq \sum_{j=K_n}^{K_{n+1}-1}\chi^2_{j-K_{n-k}}(\phi)\\
&\leq C\sum_{j=\lfloor n^{\beta}\rfloor}^{\lfloor(n+1)^{\beta}\rfloor-1}\frac{1}{(j-\lfloor (n-k)^{\beta}\rfloor)^{1+\delta}}=C\sum_{j=\lfloor n^{\beta}\rfloor-\lfloor (n-k)^{\beta}\rfloor}^{\lfloor(n+1)^{\beta}\rfloor-\lfloor (n-k)^{\beta}\rfloor-1}\frac{1}{j^{1+\delta}}\\
&\leq C\int_{n^{\beta}-(n-k)^{\beta}-2}^{(n+1)^{\beta}-(n-k)^{\beta}}\frac{1}{x^{1+\delta}}dx=C\frac{\Delta_k^n}{\delta}\leq C\delta^{-1}\Delta_k^{k+1}\leq 4C\frac{2^{\beta} 4^{\delta}\beta\delta^{-1}}{k^{\delta\beta+1}}.
\end{align*}
Using (\ref{Bret}), we obtain that $b_k\leq (2C\frac{2^{\beta} 4^{\delta}\beta\delta^{-1}}{k^{\delta\beta+1}})^{1/2}\wedge (1-\eps)$ for all $k\geq 3$ and
$b_k\leq 1-\eps$ for $0\leq k\leq 2$.

Next, let $f_i:= b_{i-1}\prod_{k=0}^{i-2}(1-b_k)$
for $i\geq 1$ (with the convention that $\prod_{j=0}^{-1}=1$)
and
$u_i:= P[Z_i = 1]$
for $i \geq 0$. 
We have that
$$P[Z_n = 1] = \sum_{k= 1}^nP[Z_n = 1, Z^{n-1}_{n-k+1} = 0| Z_{n-k} = 1]P[Z_{n-k} = 1],$$
hence the following renewal equation holds
$$u_n = \sum_{k = 1}^nf_{k}u_{n-k}.$$
By definition, we have that $\sum_{k= 1}^\infty f_k = 1-\prod_{k = 1}^\infty(1-b_k)$. If $\beta>\delta^{-1}$, we have $\sum_{k = 0}^\infty b_k < \infty$. Hence $\sum_{k= 1}^\infty f_k < 1$. Moreover, when $\beta>\delta^{-1}$ we have that $b_k \leq c_1k^{-(\delta\beta+1)/2}$, for some positive constant $c_1$ that depends on $C, \delta$ and $\beta$. From Lemma \ref{renewal}, we have that, for all $n\geq 1$,
\begin{align*}
u_n\leq \frac{C_1}{n^{\frac{\delta\beta+1}{2}}}
\end{align*}
where $C_1$ is a positive constant that depends on $C, \delta$ and $\beta$.
Because $P[Z_n = 1] \geq \P^{x,y}[X_n = 1]$,  we obtain that for all $n\geq 1$,
\begin{equation*}
\P^{x,y}[X_n=1]\leq \frac{C_1}{n^{\frac{\delta\beta+1}{2}}}
\end{equation*}
for $\beta>\delta^{-1}$. Because the bound is uniform on $x,y \in \X_-$, we obtain the desired result.
\qed

\section{Proofs of Corollaries \ref{Cor1} and \ref{Cor2}}
\subsection{Proof of Corollary  \ref{Cor1}}

 For $k\in [K_n, K_{n+1})$ and all $y,z \in \X_-$, using the coupling inequality for total variation distance (cf. \cite{thorisson/2000}), we have that
\begin{equation*}
d_{\text{TV}}(\mu^y[\eta_k \in \cdot\;],\mu^z[\eta_k \in \cdot\;])\leq \P[\hat{\eta}_k\neq \hat{\omega}_k]\leq \P[X_n=1].
\end{equation*}
Then, by Theorem \ref{Maintheo}, we obtain that
\begin{equation*}
\P[\hat{\eta}_k\neq \hat{\omega}_k]\leq \frac{C_1}{n^{\frac{\beta \delta+1}{2}}}.
\end{equation*}
for all $\beta\geq 1$ and $\beta>\delta^{-1}$. If $\delta>1$, just take $\beta=1$. In this case $k=n$, thus we obtain Corollary \ref{Cor1} with constant $C_2$ that depends on $C$ and $\delta$. If $\delta\in (0,1)$, since $k\leq (n+1)^{\beta}$, we have $n\geq k^{1/\beta}-1$. This leads to
\begin{equation*}
\P[\hat{\eta}_k\neq \hat{\omega}_k]\leq \frac{C_3}{k^{\frac{\beta \delta+1}{2\beta}}}
\end{equation*}
for all $k\geq 1$, where $C_3$ is a positive constant that depends on $C, \delta$, and $\beta$. Now, observe that  for any $0< \delta'<\delta$, we can choose $\beta$ such that $\beta\geq 1$, $\beta>\delta^{-1}$ and $\frac{\beta \delta+1}{2\beta}\geq \delta'$.
\qed
\subsection{Proof of Corollary  \ref{Cor2}}

Consider $k\in [K_n, K_{n+1})$.  Let 
$$\theta=\inf\{n\geq 1: \hat{\eta}_k=\hat{\omega}_k,\; \text{for all}\; k\geq n\},$$ with the convention that $\inf\emptyset=\infty$. We start by observing that
\begin{align*}
\P[\theta>k]\leq \P\Big[\bigcup_{j\geq n} \{ X_j=1\}\Big]\leq \sum_{j\geq n} \P[X_j=1].
\end{align*}
By Theorem \ref{Maintheo}, we obtain that
\begin{align*}
\P[\theta>k]\leq C_1\sum_{j\geq n} \frac{1}{n^{\frac{\beta \delta+1}{2}}}\leq \frac{C_1'}{n^{\frac{\beta\delta-1}{2}}}
\end{align*}
for $\beta\geq 1$, $\beta>\delta^{-1}$ and $C_1'$ a positive constant that depends on $C, \delta$, and $\beta$. Since $n\geq k^{1/\beta}-1$, we obtain that
\begin{align*}
\P[\theta>k]\leq \frac{C_4}{k^{\frac{\beta\delta-1}{2\beta}}}
\end{align*}
for all $k\geq 1$ and $C_4$ a positive constant that depends on $C, \delta$, and $\beta$. Finally, notice that for all $\delta'<\delta$, we can choose $\beta$ large enough such that $\frac{\beta\delta-1}{2\beta}\geq \delta'$. Using the coupling inequality (cf. \cite{thorisson/2000}), we conclude that
\begin{align*}
M(n) := \sup_{y,z} d_{\text{TV}}(\mu^y[(\eta_j)_{j\geq n} \in \cdot\;],\mu^z[(\eta_j)_{j\geq n} \in \cdot\;]) \leq \P[\theta>k].
\end{align*}
\qed

\section{Proof of Theorem \ref{Maintheo2}}
Consider $K_m=\lfloor m^{\beta} \rfloor$, for $m\ge 1$. For each $x, y\in \mathcal{X}_-$, we consider a probability space $(\Omega, \mathcal{F}, \P^{x,y})$ that supports the random elements $\tilde{\eta}$, $\tilde{\omega}$ and $\tilde{Z}$ defined as follows. Let $\tilde{\eta}_*\P^{x,y}$, $\tilde{\omega}_*\P^{x,y}$ be compatible with $\phi$ and $\tilde{\eta}_{-\infty}^{0}=x,  \tilde{\omega}_{-\infty}^{0}=y$. Also, for all $m\geq 1$ given the pasts $\tilde{\eta}_{-\infty}^{K_m-1}$ and  $\tilde{\omega}_{-\infty}^{K_m-1}$, the blocks $\tilde{\eta}_{K_m}^{K_{m+1}-1}$ and $\tilde{\omega}_{K_m}^{K_{m+1}-1}$ are maximally coupled. Under $\P^{x,y}$, the process $\tilde{Z}$ has the same law as the process $Z$ defined in Section \ref{Lil} and verifies $\tilde{Z}_m\geq \tilde{X}_m:={\color{red}\mathds{1}}\{\exists j\in [K_m, K_{m+1}), \tilde{\eta}_j \neq \tilde{\omega}_j\}$ for all $m\geq 1$ (this is indeed possible since $Z$ stochastically dominates $X$, see Section \ref{Lil}). We denote by $\E^{x,y}$ the expectation with respect to $\P^{x,y}$.

Fix some $n\in \N$ and let $k$ be such that $n\in [K_{k-1}, K_{k})$. We will show that
\begin{equation}
\label{RIC}
\Big| \int f\circ T^n \;{\hat f} d\tilde\mu-\int f d\tilde\mu\int {\hat f} d\tilde\mu \Big|\leq c_1 P[Z_k=1]
\end{equation}
for some positive constant $c_1$ that depends only on $C$, $\delta$ and $\beta$.
From this point, Theorem \ref{Maintheo2} is easily obtained following the proof of Corollary \ref{Cor1}.
To obtain (\ref{RIC}), we follow the argument developed in \cite{bressaud/fernandez/galves/1999a}, Section 5.
Using (3.7) in  \cite{bressaud/fernandez/galves/1999a}, we first observe that
\begin{align*}
\Big| \int f\circ T^n \;{\hat f} d\tilde\mu-\int f d\tilde\mu\int {\hat f} d\tilde\mu\Big | \leq \|f\|_1\sup_{x,y} \E^{x,y}\Big[\big|\hat{f}(\tilde{\eta}_{-\infty}^n)-\hat{f}(\tilde{\omega}_{-\infty}^{n})  \big|\Big].                                                                                                                                      
\end{align*}
For $k\geq 1$, let 
\begin{equation*}
\theta_k=\inf\{0\leq m\leq k:\tilde{Z}_{k-m}=1\}.
\end{equation*}
We have that
\begin{align}  \label{eq:varbound}
\E^{x,y}\Big[\big|\hat{f}(\tilde{\eta}_{-\infty}^n)-\hat{f}(\tilde{\omega}_{-\infty}^{n})  \big|\Big]
&=\E^{x,y}\Big[\sum_{j=0}^k \mathds{1}\{\theta_k=j\}\big|\hat{f}(\tilde{\eta}_{-\infty}^n)-\hat{f}(\tilde{\omega}_{-\infty}^{n})  \big|\Big]\nonumber\\
&\leq \|\hat{f}\|_{\phi} \sum_{j=0}^k \var_{n-K_{k-j}+1}(e^{\phi})\P^{x,y}[\theta_k=j].
\end{align}
Observe that for all $0\leq j\leq k$,
\begin{align*}
\P^{x,y}[\theta_k=j]&=P[Z_k=0,\dots,Z_{k-j+1}=0,Z_{k-j}=1]\nonumber\\
&=\prod_{l=1}^j(1-b_{k-j+l})P[Z_{k-j}=1]
\end{align*}
where $b_{k-j+l}$ are from (\ref{GHJ1}).
Now, observe that for all $i\geq 1$ we have 
\begin{align*}
P[Z_i=1]&=\sum_{k=1}^ib_{i-k}P[Z_{i-1}=0\mid Z_k^{i-1}=0, Z_{k-1}=1]\nonumber\\
&=\sum_{k=1}^i b_{i-k}\prod_{l=1}^{i-k}(1-b_{i-k-l})P[Z_{k-1}=1].
\end{align*}
Thus, we have that for all $i\geq 1$
\begin{align*}
P[Z_i=1]&=\sum_{k=1}^if_kP[Z_{i-k}=1]
\end{align*}
with $f_k:=b_{k-1}\prod_{l=0}^{k-2}(1-b_l)$, $k\geq 1$.
From this, we obtain that
\begin{align*}
\E^{x,y}\Big[\big|\hat{f}(\tilde{\eta}_{-\infty}^n)-\hat{f}(\tilde{\omega}_{-\infty}^{n})  \big|\Big]&\leq 
\|\hat{f}\|_{\phi}\Bigg( \var_{n-K_{k}+1}(e^\phi)\sum_{l=1}^k f_lP[Z_{k-l}=1]\nonumber\\
&\;\;\;\;\;+ \sum_{l=1}^k \var_{n-K_{k-l}+1}(e^\phi)\prod_{m=1}^{l}(1-b_{k-l+m})P[Z_{k-l}=1]   \Bigg).
\end{align*}
We deduce that 
\begin{align*}
\sup_{x,y}\E^{x,y}\Big[\big|\hat{f}(\tilde{\eta}_{-\infty}^n)-\hat{f}(\tilde{\omega}_{-\infty}^{n})  \big|\Big]&\leq
 \kappa \sum_{l=1}^k f_lP[Z_{k-l}=1]=\kappa P[Z_{k}=1]
\end{align*}
with 
$$\kappa:=\var_{n-K_{k}+1}(e^\phi)+\sup_{1\leq l\leq k}\frac{\var_{n-K_{k-l}+1}(e^\phi)}{f_l}.
$$

Finally, since $\frac{\var_{n-K_{k-l}+1}(e^\phi)}{b_{l-1}}\leq 2$ and $\prod_{j=0}^{\infty}(1-b_j)>0$ (using that $b_0 < 1$ and $\sum_{j=0}^\infty b_j<\infty$), we observe that 
\begin{equation*}
\kappa\leq 1+\frac{\var_{n-K_{k-l}+1}(e^\phi)}{b_{l-1}\prod_{j=0}^{\infty}(1-b_j)}\leq c_2
\end{equation*}
for some positive constant $c_2$ depending on $C$, $\delta$, and $\beta$.
\qed

\subsection{Proof of Corollary  \ref{coro3}}
The potential $\phi$ is summable, therefore, there exists a normalized potential $\psi$ with the same unique equilibrium state as $\phi$ \cite[Theorem 3.2.]{walters/1975}. If  $\var_{k}(\phi)\leq \frac{C}{k^{\frac{3+ \delta}{2}}}$ then $\var_{k}(\psi)\leq \frac{C'}{k^{\frac{1+ \delta}{2}}}$ for some constat $C' > 0$ that depends only on $C$ (see for example \cite[Proposition 1]{pollicott2000rates}). Closely following the proof of Theorem \ref{Maintheo2} using the measures compatible with $\psi$, we obtain the desired results. The only difference is that in \eqref{eq:varbound} we use the bound 
\begin{equation*} 
\E^{x,y}\Big[\big|\hat{f}(\tilde{\eta}_{-\infty}^n)-\hat{f}(\tilde{\omega}_{-\infty}^{n})  \big|\Big]
\leq \|\hat{f}\|_{\phi} \sum_{j=0}^k \var_{n-K_{k-j}+1}(e^{\phi})\P^{x,y}[\theta_k=j],
\end{equation*}
instead of
\begin{equation*} 
\E^{x,y}\Big[\big|\hat{f}(\tilde{\eta}_{-\infty}^n)-\hat{f}(\tilde{\omega}_{-\infty}^{n})  \big|\Big]
\leq \|\hat{f}\|_{\psi} \sum_{j=0}^k \var_{n-K_{k-j}+1}(e^{\psi})\P^{x,y}[\theta_k=j].
\end{equation*}
\qed

\section{Applications}
\subsection{Functional central limit theorem (FCLT) for potentials with non-summable variations}
Let $\sigma > 0$. A function $h:A\to \R$ satisfies the FCLT, also called Weak Invariance Principle, when the process $\{\zeta_n(t), t \in [0,1], n\geq 1\}$ defined by 
\begin{equation*}
\zeta_n(t) = \frac{1}{\sigma \sqrt{n}} \sum_{i = 0}^{\lfloor nt \rfloor} h\circ \eta_i,
\end{equation*}
converges weakly to a standard Brownian motion on $D[0,1]$. \cite[Section 4.3]{tyran2005invariance} showed that the FCLT holds when the potential $\phi$ has summable variations. A straightforward application of Theorem 1 in \cite{tyran2005invariance} and our Corollary~ \ref{Cor1} is the following FCLT for potentials with non-summable variations.
\begin{prop}
\label{fclt}
Assume that the alphabet $A$ is finite and $\phi$ satisfies Assumption ($\mathcal{A}$) with $\delta \in (1/2,1]$. Let $\mu$ be shift-invariant and compatible with $\phi$. Also, let $h:A \to \R$ be a function such that $\int h\circ \eta_0\; d\mu = 0$. If $\sigma^2 :=\int (h\circ \eta_0)^2\; d\mu > 0$, then $h$ satisfies the FCLT. 
\end{prop}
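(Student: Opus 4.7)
The plan is to verify the hypotheses of Theorem 1 in \cite{tyran2005invariance} applied to the stationary sequence $(h\circ \eta_i)_{i\geq 0}$. The ingredients needed are: stationarity, centering, finite positive variance, and a mixing rate for the underlying chain that decays strictly faster than $n^{-1/2}$.

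First I would dispose of the soft conditions. Stationarity of $(h \circ \eta_i)_{i\geq 0}$ is immediate from the shift-invariance of $\mu$; the centering $\int h\circ \eta_0\, d\mu = 0$ and the non-degeneracy $\sigma^2 > 0$ are given in the statement. The sequence is square-integrable because $h$ takes values in $\R$ and $\sigma^2$ is assumed finite. This gives every prerequisite of Tyran-Kami\'nska's theorem except the mixing rate.

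The key step is to extract the mixing condition from Corollary \ref{Cor1}. Since we are in the regime $\delta \in (1/2,1]$, we may pick $\delta' \in (1/2, \delta)$ and Corollary \ref{Cor1} yields $L(n) \le C_2 n^{-\delta'}$ with $\delta' > 1/2$. Because $L(n)$ is the supremum, over all pairs of pasts, of the total-variation distance between the conditional laws of $\eta_n$, it dominates the classical $\phi$-mixing coefficient of the stationary chain $(\eta_i)_{i\in\Z}$, and a fortiori its $\alpha$-mixing coefficient. In particular the mixing coefficient of $(h \circ \eta_i)_{i \ge 0}$ inherits the polynomial decay with exponent strictly larger than $1/2$, which is precisely the hypothesis of Theorem 1 in \cite{tyran2005invariance}.

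The main obstacle is not conceptual but a matter of matching conventions: one needs to confirm that the mixing notion used in \cite{tyran2005invariance} is indeed controlled by our $L(n)$ bound. If the cited theorem is instead stated in terms of decay of correlations for observables, one would substitute Theorem \ref{Maintheo2} — which gives a correlation-decay rate of the same polynomial order $\delta' > 1/2$ — into the verification. Once this translation is made, Theorem 1 of \cite{tyran2005invariance} delivers the FCLT with normalization $\sigma\sqrt{n}$ and the proposition follows.
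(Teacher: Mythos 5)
Your overall strategy---verify the hypotheses of Theorem 1 of \cite{tyran2005invariance} via Corollary \ref{Cor1} with $\delta'\in(1/2,\delta)$---is the same as the paper's, but the pivotal step, translating the total-variation bound into the mixing hypothesis of that theorem, contains a genuine gap. You assert that $L(n)$ ``dominates the classical $\phi$-mixing coefficient \dots and a fortiori its $\alpha$-mixing coefficient.'' It does not: $L(n)$ controls only the law of the single coordinate $\eta_n$, whereas the $\phi$- and $\alpha$-mixing coefficients are suprema over events in the whole future $\sigma$-algebra $\sigma(\eta_j,\ j\geq n)$. The quantity controlling that $\sigma$-algebra is $M(n)$, and the best available bound (Corollary \ref{Cor2}) is of order $n^{-\delta'/2}$ with $\delta'<\delta\leq 1$, i.e.\ an exponent strictly below $1/2$; this is not even summable, so the route through classical mixing coefficients (which for bounded observables already requires $\sum_n\alpha(n)<\infty$) fails precisely in the regime $\delta\in(1/2,1]$ the proposition addresses.

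The hypothesis actually required by Theorem 1 of \cite{tyran2005invariance}, and the one the paper verifies, is a \emph{coupling} condition on a single coordinate: there must exist $\gamma>1/2$ with $\limsup_n n^{\gamma}\sup_{x,y}\E^{x,y}\big[|h(\tilde\eta_n)-h(\tilde\omega_n)|\big]<\infty$, where $(\tilde\eta,\tilde\omega)$ is the explicit block coupling built in the proof of Theorem \ref{Maintheo}. This follows from $\E^{x,y}[|h(\tilde\eta_n)-h(\tilde\omega_n)|]\leq 2\|h\|_\infty\,\P^{x,y}[\tilde\eta_n\neq\tilde\omega_n]\leq c_1 n^{-\delta'}$ upon choosing $1/2<\gamma\leq\delta'<\delta$; note that this uses the coupling itself, not merely the total-variation statement. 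Two smaller points: your justification of square-integrability (``$\sigma^2$ is assumed finite'') is circular, since the statement only assumes $\sigma^2>0$; finiteness comes from regularity of $\phi$ forcing $A$ to be finite, hence $h$ bounded. And your fallback via Theorem \ref{Maintheo2} would give correlation decay $n^{-\delta'}$ with $\delta'\leq 1$, which is not summable and so does not by itself yield a $\sqrt{n}$-normalized invariance principle.
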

\begin{proof}
Because alphabet is finite, we have that $\sigma^2 \leq \|h\|^2_\infty < \infty$ as required by Theorem 1 in \cite{tyran2005invariance}. It remains to verify the condition on the mixing rate. Using the processes $\{\tilde{\eta}_i, i \in \Z\}$ and $\{\tilde{\omega}_i, i \in \Z\}$ introduced in the proof of Theorem \ref{Maintheo2}, it is sufficient to check that there exists $\gamma > 1/2$ such that
\begin{equation}\label{condFCLT}
\limsup_{n \rightarrow \infty} n^\gamma \sup_{x,y}\E^{x,y}[|h(\tilde{\eta}_n)-h(\tilde{\omega}_n)|] < \infty.
\end{equation}
We have from Corollary ~ \ref{Cor1} that, for all $x,y \in \X_-$,
\begin{align*} 
\E^{x,y}[|h(\tilde{\eta}_n)-h(\tilde{\omega}_n)|] &\leq 2\|h\|_{\infty} \P^{x,y}[\tilde{\eta}_n\neq \tilde{\omega}_n]\\
&\leq \frac{c_1}{n^{\delta'}},
\end{align*}
where $\delta' < \delta$ and $c_1 $ is positive constant that depends on $C, \delta, \delta'$, and $h$. Taking $\delta'$ and $\gamma$ such that $1/2 < \gamma \leq \delta' <\delta$, we obtain \eqref{condFCLT}.
\end{proof}

\subsection{Hoeffding-type inequality for potentials with non-summable variations}

Hoeffding-type inequality gives finite sample bounds for deviations of additive functionals from their mean. When the variation rate of the potential is summable, we have an exponential inequality \cite{marton/1998, gallesco/gallo/takahashi/2014, chazottes2020optimal}. Nevertheless, the rate of concentration for potentials when the variation rate is not summable is an open question. Using our result, we can obtain the following stretched exponential inequality for sums of random variables.
\begin{prop}
\label{ConcIneq}
Assume that the alphabet $A$ is finite and $\phi$ satisfies Assumption ($\mathcal{A}$) with $\delta \in (1/2,1]$. Let $\mu$ be compatible with $\phi$. For all $\delta'<2\delta-1$, $n\geq 1$, $t\geq 0$ and all functions $h:A \to \R$ we have that
\begin{equation*}
\mu\Bigg[\Big|\frac{1}{n}\sum_{i=1}^n(h(\eta_i)-\E[h(\eta_i)])\Big|\geq t\Bigg]\leq 2\exp\Bigg\{-\frac{C_{10}n^{\delta'}t^2}{R(h)^2}\Bigg\}
\end{equation*}
where $R(h):=\max_{a\in A}h(a)-\min_{a\in A}h(a)$ and $C_{10}$  is a constant that depends on $C$, $\delta$ and~$\delta'$.
\end{prop}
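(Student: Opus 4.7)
Apply Azuma--Hoeffding to the Doob martingale of $S_n:=\sum_{i=1}^n(h(\eta_i)-\E h(\eta_i))$ with respect to $\mathcal{F}_k=\sigma(\eta_1,\ldots,\eta_k)$, and control the martingale increments by Corollary~\ref{Cor1}. Regularity of $\phi$ makes $A$ finite and $h$ bounded, so $S_n$ is integrable and $R(h)<\infty$.

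Write $M_k=\E[S_n\mid\mathcal{F}_k]$ and $D_k=M_k-M_{k-1}$, so
\begin{equation*}
D_k=\sum_{j=k}^n\bigl(\E[h(\eta_j)\mid\mathcal{F}_k]-\E[h(\eta_j)\mid\mathcal{F}_{k-1}]\bigr).
\end{equation*}
Averaging $\eta_k$ against its $\mathcal{F}_{k-1}$-conditional law and using $\bigl|\sum_a h(a)(p(a)-q(a))\bigr|\leq R(h)\,d_{\text{TV}}(p,q)$ yields the elementary bound
\begin{equation*}
\bigl|\E[h(\eta_j)\mid\mathcal{F}_k]-\E[h(\eta_j)\mid\mathcal{F}_{k-1}]\bigr|\leq R(h)\,\Delta_n(k,j),
\end{equation*}
where $\Delta_n(k,k):=1$ and, for $j>k$,
\begin{equation*}
\Delta_n(k,j):=\sup_{y\in A^{k-1},\,w,w'\in A}d_{\text{TV}}\bigl(\mu[\eta_j\in\cdot\mid\eta_1^{k-1}=y,\eta_k=w],\,\mu[\eta_j\in\cdot\mid\eta_1^{k-1}=y,\eta_k=w']\bigr).
\end{equation*}
Because $\mu$ is compatible with $\phi$, the conditional law of $(\eta_{k+1},\eta_{k+2},\ldots)$ given $\eta_1^k$ belongs to $\mathcal{P}(\phi)$; hence, for every $\delta''<\delta$, Corollary~\ref{Cor1} yields $\Delta_n(k,j)\leq C'(j-k)^{-\delta''}$ when $j>k$. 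Since $\delta''<\delta\leq 1$, summing over $j\in\{k,\ldots,n\}$ gives $|D_k|\leq K\,R(h)\,n^{1-\delta''}$ for some $K=K(C,\delta,\delta'')$.

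Azuma--Hoeffding applied to $(M_k)_{k=0}^n$ with this uniform increment bound then delivers
\begin{equation*}
\mu\bigl[|S_n|\geq nt\bigr]\leq 2\exp\Bigl\{-\frac{(nt)^2}{2n\bigl(K\,R(h)\,n^{1-\delta''}\bigr)^2}\Bigr\}=2\exp\Bigl\{-\frac{n^{2\delta''-1}\,t^2}{2K^2\,R(h)^2}\Bigr\}.
\end{equation*}
Since $\delta''<\delta$ can be chosen arbitrarily close to $\delta$, the exponent $2\delta''-1$ realizes any value strictly below $2\delta-1$, and after writing $\delta':=2\delta''-1$ and absorbing constants into $C_7$ the proposition follows. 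The main conceptual point is to identify a mixing coefficient that simultaneously (i) dominates the Doob-martingale increments and (ii) is bounded by Corollary~\ref{Cor1}: the coefficient $\Delta_n(k,j)$ fulfils (i) by the one-line conditional-expectation argument above, while the $\phi$-compatibility of $\mu$ turns the law conditioned on $\eta_1^k$ into a measure in $\mathcal{P}(\phi)$, making Corollary~\ref{Cor1} applicable at time lag $j-k$ and establishing (ii). The restriction $\delta\in(1/2,1]$ enters twice: $\delta>1/2$ keeps $2\delta-1$ positive, and $\delta\leq 1$ is precisely the non-summable regime in which $\sum_{m\geq 1}m^{-\delta''}$ diverges and forces the polynomial growth $n^{1-\delta''}$ of $\|\Delta_n\|_\infty$ that produces the stretched (rather than genuine) exponential rate.
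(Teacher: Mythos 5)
Your proof is correct, but it follows a genuinely different route from the paper. The paper treats the concentration step as a black box: it invokes Theorem 1 of Chazottes--Collet--K\"ulske--Redig, so the only work is to bound the $\ell^2$ operator norm of their coupling matrix $\overline{D}$ by $\sup_{x,y}\bigl(1+\sum_{i=1}^n\P^{x,y}[\hat{\eta}_i\neq\hat{\omega}_i]\bigr)^2\leq c_1 n^{1-\delta'}$ via Corollary~\ref{Cor1}, and to compute $\|\delta f\|_{\ell^2}^2=R(h)^2/n$. You instead reprove the relevant special case from scratch: Doob martingale, the telescoping bound $|D_k|\leq R(h)\sum_{j\geq k}\Delta_n(k,j)$, the observation that the conditional laws given $\eta_1^{k-1}=y$ and $\eta_k=w$ versus $w'$ are (after a time shift) two elements of $\mathcal{P}(\phi)$ so that Corollary~\ref{Cor1} controls $\Delta_n(k,j)$ at lag $j-k$, and finally Azuma--Hoeffding. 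This is the same mechanism that underlies the cited theorem (martingale differences dominated by coupling/total-variation decay), and your exponent bookkeeping ($2\delta''-1\to\delta'<2\delta-1$) matches the paper's. What your version buys is self-containedness and transparency about where the restriction $\delta>1/2$ enters; what the paper's version buys is brevity and generality, since the CCKR inequality applies to arbitrary functions of $(\eta_1,\dots,\eta_n)$ with controlled oscillations, not only to additive functionals. One presentational caveat: you should justify explicitly that $\mu[\,\cdot\mid\eta_1^k=z]$, re-indexed so that time $k$ becomes time $0$, is compatible with $\phi$ (its past is the conditional law of $\eta_{-\infty}^k$ and its forward transitions are still $e^{\phi}$); this is exactly the reduction the paper performs at the start of the proof of Theorem~\ref{Maintheo}, so it is available, but it deserves a sentence.
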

\noindent
\begin{proof} This is a consequence of Theorem 1 of \cite{chazottes/collet/kulske/regig/2007} and Corollary~\ref{Cor1}. In order to apply Theorem 1 of \cite{chazottes/collet/kulske/regig/2007}, we need to estimate the terms $\|\overline{D}\|_{\ell^2(\N)}^2$ and $\|\delta f\|^2_{\ell^2(\N)}$ (for $f(x_1,\dots,x_n)=\frac{1}{n}\sum_{i=1}^n h(x_i)$) there.
We have that
\begin{equation*}
\|\overline{D}\|_{\ell^2(\N)}^2\leq \sup_{x,y\in \mathcal{X}_-} \Big(1+\sum_{i=1}^n\P^{x,y}[\hat{\eta}_i\neq \hat{\omega}_i]\Big)^2.
\end{equation*}
Now, for $\delta'<2\delta-1$, using Corollary~ \ref{Cor1}, we obtain that
\begin{equation}
\label{normD}
\|\overline{D}\|_{\ell^2(\N)}^2\leq c_1 n^{1-\delta'}
\end{equation}
for some positive constant $c_1$ depending on $C, \delta$ and $\delta'$.

For a given function $f: A^{n}\to \R$ we define the oscillation of $f$ at site $i\in \{1,\dots,n\}$, by 
\[
\delta_if:= \sup_{x_j=x'_j, j \neq i}\big|f(x_1,\dots,x_n)-f(x'_1,\dots,x'_n)\big|.
\]
Now, taking $f(x_1,\dots, x_{n})= \frac{1}{n}\sum_{i=1}^{n}h(x_i)$ we have $\delta_i f=\frac{R(h)}{n}$ for $i\in \{1,\dots,n\}$. Thus, we obtain that
\begin{align}
\label{normf}
\|\delta f\|^2_{\ell^2(\N)}=\sum_{i=1}^{n}\left(\frac{R(h)}{n}\right)^2= \frac{R(h)^2}{n}.
\end{align}
Finally, using (\ref{normD}) and (\ref{normf}) in Theorem 1 of~\cite{chazottes/collet/kulske/regig/2007}, we obtain Proposition \ref{ConcIneq}.
\end{proof}

\subsection{Poisson autoregression model}
\label{Poisson}
As a second application of our results, we consider a model with countable infinite alphabet called Poisson autoregression, which is popular in applications \cite{kedem2005regression}. Only the Markovian case of these models were studied in the literature. We will show how we can choose the parameters of non-Markovian Possion autoregression models to satisfy the Assumption ($\mathcal{A}$) and thus apply the results of Section~\ref{Results}. 

Consider an absolutely converging sequence $(\beta_i)_{i\geq 1}$ and a sequence of non negative integers $(\gamma_i)_{i\geq 1}$ such that $S:=\sum_{i=1}^{\infty}|\beta_i|\gamma_i<\infty$.
Consider $A=\Z_+$ and the potential $\phi$ defined for all $x\in \mathcal{X}_-$ by
$$
\phi(x)=-\lambda(x_{-\infty}^{-1}) + x_0\log \lambda(x_{-\infty}^{-1}) - \sum_{k = 0}^{x_0}\log(k),
$$
where
$$
\lambda(x_{-\infty}^{-1})=\exp\Big\{\sum_{i=1}^{\infty}\beta_i (x_{-i}\wedge \gamma_i)\Big\}.
$$
For this model, we obtain that
\begin{equation}
\label{RTYUP}
\chi^2_k(\phi)=\sup_{a\in \mathcal{X}} \sup_{x,y\in  \mathcal{X}_-}\Bigg(e^{\lambda(a_{-k}^{-1}y)\Big(\frac{\lambda(a_{-k}^{-1}x)}{\lambda(a_{-k}^{-1}y)}-1\Big)^2}-1\Bigg).
\end{equation}
Now, since $e^{-S}  \leq \lambda(x_{-\infty}^{-1})\leq  e^S$ and  the exponential function is locally bilipschitz, using (\ref{RTYUP}), we have that
\begin{equation*}
c_1^{-1}  \Big(\sum_{i=k+1}^{\infty}|\beta_i|\gamma_i\Big)^2 \leq  \chi^2_k(\phi)\leq c_1 \Big(\sum_{i=k+1}^{\infty}|\beta_i|\gamma_i\Big)^2
\end{equation*}
where $c_1$ is a positive constant that depends only on $S$. 

Finally, choosing the sequences $(\beta_i)_{i\geq 1}$ and $(\gamma_i)_{i\geq 1}$ such that 
$$\frac{c_2^{-1}}{i^{\frac{3+\varepsilon}{2}}}\leq |\beta_i|\gamma_i \leq \frac{c_2}{i^{\frac{3+\varepsilon}{2}}}$$
 for some $\varepsilon>0$ and $c_2\geq 1$, we obtain that  
\begin{equation*}
\frac{c_3^{-1}}{k^{1+\varepsilon}}\leq \chi^2_k(\phi)\leq \frac{c_3}{k^{1+\varepsilon}}.
\end{equation*}
where $c_3$ is a positive constant.

Finally, for this model, we mention that the existence of a shift-invariant probability measure compatible with $\phi$ is obtained by applying Theorem 5.1 of~\cite{johansson/oberg/pollicot/2007} with $K=e^{2\sinh S}$ and $\pi$ equal to the Poisson law with parameter $e^S$. Assumption ($\mathcal{A}$) implies the square summability of the variation, which guarantees the uniqueness of the shift-invariant probability measure \cite[Corollary 4.2.]{johansson/oberg/pollicot/2007}.

\section*{Acknowledgements}
\noindent
C.~Gallesco and D.Y.~Takahashi would like to thank Sandro Gallo for several fruitful discussions that motivated this work. We also thank Leandro Cioletti for comments on a early version of this manuscript.
C.~Gallesco was partially supported by FAPESP (grant 2017/19876-4) and CNPq (grant 312181/2017-5). D.Y.~Takahashi thanks the support of FAPESP  Research, Innovation and Dissemination Center for Neuromathematics (grant 2013/ 07699-0).

\bibliographystyle{alpha}
\bibliography{MixingTime}

\end{document}